\documentclass[reqno]{amsart}

\usepackage{amsmath,amssymb,amscd,accents} \usepackage{graphicx}
\DeclareGraphicsExtensions{.eps} \usepackage{mathrsfs}
\usepackage[mathcal]{eucal} \usepackage{esint} \usepackage{color}

\newtheorem{Thm}{Theorem}{\bfseries}{\itshape}
\newtheorem*{Thm*}{Theorem}{\bfseries}{\itshape}
\newtheorem{Cor}{Corollary}{\bfseries}{\itshape}
\newtheorem{Prop}[Cor]{Proposition}{\bfseries}{\itshape}
\newtheorem{Lem}[Cor]{Lemma}{\bfseries}{\itshape}
\newtheorem*{Lem*}{Lemma}{\bfseries}{\itshape}
{\bfseries}{\itshape}
{\bfseries}{\itshape}
\newtheorem{Def}[Cor]{Definition}{\bfseries}{\rmfamily}
{\scshape}{\rmfamily}
\newtheorem{Rem}[Cor]{Remark}{\scshape}{\rmfamily}
{\scshape}{\rmfamily}
{\bfseries}{\itshape}

\renewcommand\ge{\geqslant} \renewcommand\le{\leqslant}
\let\tildeaccent=\~ \let\hataccent=\^
\renewcommand\~[1]{\widetilde{#1}}

\def\<{\left<} \def\>{\right>} \def\({\left(} \def\){\right)}

 \def\norm#1{\left\Vert #1
  \right\Vert} 

\let\parasymbol=\S \def\secref#1{\parasymbol\ref{#1}}
 \def\pd#1#2{\tfrac{\partial#1}{\partial#2}}

\let\polishL=l \def\Zoladek.{\.Zol\c adek}

 \def\Im{\operatorname{Im}}

\def\etc.{\emph{etc}.}

\def\:{\colon} \def\R{{\mathbb R}} \def\C{{\mathbb C}} \def\Z{{\mathbb
    Z}} \def\N{{\mathbb N}} \def\Q{{\mathbb Q}} \def\P{{\mathbb P}}

\def\A{{\mathbb A}} 

\let\PolishL=\L 
\def\L{{\mathbb L}}

 \def\e{\varepsilon} \def\S{\varSigma}

\def\poly{\operatorname{poly}}

 \def\Lojas.{\PolishL ojasiewicz}      
  
  \def\cL{{\mathcal L}}

  \def\cC{{\mathcal C}}

\def\rest#1{{\vert_{#1}}}

\begin{document}

\title[Bounds for rational points on algebraic curves]{Bounds for rational points on algebraic curves, optimal in the degree, and dimension growth}

\author[Binyamini]{Gal Binyamini} \address{Weizmann Institute of
  Science, Rehovot, Israel} \email{gal.binyamini@weizmann.ac.il}

\author[Cluckers]{Raf Cluckers} \address{Univ.~Lille, CNRS, UMR 8524 -
  Laboratoire Paul Painlev\'e, F-59000 Lille, France, and KU Leuven,
  Department of Mathematics, B-3001 Leu\-ven, Bel\-gium}
\email{Raf.Cluckers@univ-lille.fr}
\urladdr{http://rcluckers.perso.math.cnrs.fr/}

\author[Novikov]{Dmitry Novikov} \address{Weizmann Institute of
  Science, Rehovot, Israel} \email{dmitry.novikov@weizmann.ac.il}

\thanks{G.B. was supported
  by funding from the European Research Council (ERC) under the
  European Union's Horizon 2020 research and innovation programme
  (grant agreement No 802107). R.C.~was partially supported by KU
  Leuven IF C16/23/010 and the Labex CEMPI (ANR-11-LABX-0007-01). D.N
  was supported by the ISRAEL SCIENCE FOUNDATION (grant No. 1167/17)
  and by funding received from the MINERVA Stiftung with the funds
  from the BMBF of the Federal Republic of Germany. }

\subjclass[2020]{Primary 11D45; Secondary 14G05, 34C10, 11G35}

\begin{abstract} 
  Bounding the number of rational points of height at most $H$ on irreducible
  algebraic plane curves of degree $d$ has been an intense topic of
  investigation since the work by Bombieri and
  Pila. In this paper we establish optimal dependence on $d$, by showing the upper bound
  $C d^2 H^{2/d} (\log H)^\kappa$ with some absolute constants $C$ and $\kappa$.
This bound is optimal with respect to both $d$ and $H$,
  except for the constants $C$ and $\kappa$. This answers a
  question raised by Salberger, leading to a simplified proof
  of his results on the uniform dimension growth conjectures of
  Heath-Brown and Serre, and where at the same time we replace the
  $H^\e$ factor by a power of $\log H$.

  The main strength of our approach comes from the combination of a new, efficient
  form of smooth parametrizations of algebraic curves with a century-old
  criterion of P\'olya, which allows us to save one extra power of $d$ compared with the
  standard approach using B\'ezout's theorem.
\end{abstract}

\maketitle

\section{Introduction}

Upper bounds for the number of rational points of bounded height on
algebraic curves of degree $d$ have been studied intensely and used broadly ever since the
renowned results by Bombieri and Pila \cite{bombieri-pila}. In particular,
recently, a polynomial dependence on $d$ in the bounds is obtained in \cite{CCDN-dgc}. In this paper
we sharpen this polynomial dependence on $d$ to a quadratic dependence (see Theorem \ref{thm:main}), answering a question by Salberger from \cite{Salberger-dgc}. Such a quadratic dependence on $d$
is known to be optimal, by \cite{CCDN-dgc}. Let us make all this precise and sketch our proof ingredients, starting with Salberger's question.

While proving the dimension growth conjecture, Salberger \cite[
below Theorem 0.12]{Salberger-dgc} raises the question whether
there is $c=c(\varepsilon,n)$ such that for any algebraic integral
curve $C$ in $\P^n_\Q$ of degree $d$ and any $H\ge 1$, $\varepsilon>0$ one has
\begin{equation}\label{eq:Sal}
  \# C(\Q,H) \le cd^{2+\varepsilon}H^{2/d+\varepsilon},
\end{equation}
where $C(\Q,H)$ denotes the set of rational points on $C$ of height at
most $H$. 
He furthermore suggests that a positive answer would simplify vastly
his proof on uniform dimension growth
\cite[Theorems~0.3, 0.4]{Salberger-dgc}, which goes back to questions raised
by Heath-Brown \cite[p.~227]{Heath-Brown-cubic} and Serre
\cite[page 178]{Serre-Mordell}, \cite[p.~27]{Serre-Galois}, about
bounds for the number of rational points of bounded height lying on an
irreducible variety, in terms of its dimension.

The closest result to Salberger's suggested estimates from (\ref{eq:Sal})
prior to this work is the upper bound from \cite[Theorem 2]{CCDN-dgc}
which gives
\begin{equation}\label{eq:d4}
  \# C(\Q,H) \le cd^{4}H^{2/d},
\end{equation}
for some $c=c(n)$ depending only on
$n$. 
Furthermore, the optimality of Salberger's question about (\ref{eq:Sal}), apart
from $\varepsilon$, is shown in \cite[Proposition 5]{CCDN-dgc} where an
integral curve $C$ in $\P^2_\Q$ of any given degree $d$ is constructed for which the
lower bound
\begin{equation}\label{eq:lower}
  c' d^{2}H^{2/d}\le \# C(\Q,H)
\end{equation}
holds for some $H$ close to $d$ and
with a universal constant $c'$.

In
this paper we show that Salberger's question has a positive answer, with
$\varepsilon=0$ and just a polynomial factor in $\log H$, see Theorems \ref{thm:main} and \ref{thm:main:gen} below. Furthermore, as an application we explain how
this gives a
short, streamlined proof of Salberger's result on
uniform dimension growth \cite[Theorem~0.4]{Salberger-dgc} while furthermore replacing $H^\varepsilon$ by a power of $\log H$,
see Theorems \ref{thm:dcgdegree} and
\ref{thm:0.4} in Section \ref{sec:dgc} below. 




\subsection{}
Let us now introduce our main result, Theorem \ref{thm:main}, and sketch our proof techniques with new, efficient
  forms of smooth parametrizations of algebraic curves (see Section \ref{sec:param}) and an application of P\'olya's criterion (see Sections \ref{sec:chebyshev} and \ref{sec:chebyshev-Gamma}).

Let $P\in\R[x,y]$ be irreducible of degree $d>0$ and
set $\Gamma\subset\R^2$ to be $\Gamma:=\{P=0\}$.  Write $\Gamma(\Q,H)$
for the set of rational points on $\Gamma$ with coordinates of height
at most $H$ (that is, each coordinate is a quotient $a/b$ of integers
with $|a|\le H$ and $|b|\le H$). We will prove the following
theorem. 


\begin{Thm}\label{thm:main}
  For every $H\ge 2$, one has
  \begin{equation}\label{eq:main}
    \#\Gamma(\Q,H) \le c d^2 H^{2/d} (\log H)^\kappa
  \end{equation}
  where $c$ and $\kappa$ are universal
  constants.  
\end{Thm}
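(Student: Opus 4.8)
The plan is to combine the classical determinant (or ``Bombieri--Pila'') method with the two ingredients the abstract advertises: an efficient smooth parametrization of $\Gamma$ and P\'olya's criterion. First I would reduce to counting rational points in a single box. A rational point of height at most $H$ on $\Gamma$ lies on the affine curve $\{P=0\}$, and the relevant integer coordinates $(a,b,e)$ (clearing denominators) lie in a box of size $O(H)$. By an affine change of coordinates and a covering argument one reduces to studying, for each branch of $\Gamma$ over an interval in one of the coordinates, an analytic arc $y=f(x)$, $x\in I$, where $f$ is algebraic of degree $d$, and one wants to bound the number of rational points $(a/e, b/e)$ with $|a|,|b|,|e|\le H$ lying on this arc.

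The heart of the argument is the parametrization step: I would invoke the new form of smooth parametrization from Section~\ref{sec:param} to cover $\Gamma$ by $O(d^2)$ (or perhaps $O(d^{1+\e})$ with a $\log$ loss) analytic pieces, each of which, after an affine normalization, is given by a real-analytic map $\gamma\colon[0,1]\to[0,1]^2$ whose derivatives satisfy bounds of the shape $\|\gamma^{(k)}\|_\infty\le C^k k!$ — i.e.\ the piece extends holomorphically to a definite complex neighborhood with controlled growth. The crucial quantitative feature I expect from Section~\ref{sec:param}, and the reason one saves a power of $d$, is that the \emph{number} of such pieces is $O(d^2)$ rather than the $O(d^3)$ or $O(d^4)$ one gets from a naive Bombieri--Pila/B\'ezout analysis: this is exactly where the extra power of $d$ in \eqref{eq:d4} is shaved off.

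Given one such analytic piece with the stated growth bounds, I would apply P\'olya's criterion (Sections~\ref{sec:chebyshev}--\ref{sec:chebyshev-Gamma}) in place of the usual B\'ezout step. The standard determinant method picks $D$ rational points on the arc that all lie on an auxiliary curve of degree $O(\sqrt{D})$ obtained by making a $D\times D$ Vandermonde-type determinant in the monomials $x^i f(x)^j$ vanish, using the Taylor expansion of $f$ to bound this determinant; one then intersects with $\Gamma$ via B\'ezout, losing a factor $d$. Instead, P\'olya's theorem on integer-valued entire functions of exponential type — roughly, that an entire function which is integer-valued on $\N$ and grows slower than $c\cdot 2^z$ must be a polynomial, with effective refinements bounding the coefficients — lets one conclude directly that the monomials generating the auxiliary polynomial, evaluated along the parametrization, cannot be ``too independent'': the integrality of the coordinates of the rational points forces a low-degree algebraic relation \emph{without} passing through B\'ezout, so the count on each piece is $O(H^{2/d}(\log H)^{\kappa})$ with the degree-dependence absorbed entirely into the piece count. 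Summing the $O(H^{2/d}(\log H)^\kappa)$ points over the $O(d^2)$ pieces gives \eqref{eq:main}.

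The main obstacle I anticipate is the parametrization estimate itself: producing a cover of an arbitrary degree-$d$ plane curve by only $O(d^2)$ analytic arcs, each with uniform $C^k k!$-type derivative bounds \emph{and} with the holomorphic extension domain uniform across all pieces and independent of $d$. One must handle the curve near its singular points and near points where the projection to a coordinate axis degenerates (vertical tangents, ramification), where the naive graph parametrization blows up; the trick is presumably to reparametrize by arclength or by a well-chosen uniformizing coordinate and to control how the number of bad points (which is $O(d^2)$ by genus/ramification considerations) enters. The second delicate point is making P\'olya's criterion quantitative enough: the classical statement is qualitative, so one needs an effective version controlling the transition scale and the size of coefficients, and one must check that the $(\log H)^\kappa$ loss — rather than $H^\e$ — is exactly what this effectivization costs. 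Everything else (the box reduction, the change of coordinates, summing over pieces, and the passage from real points to the projective/homogeneous count) I expect to be routine.
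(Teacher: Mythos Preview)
Your proposal has two genuine gaps, and together they mean the argument as sketched would not go through.

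First, you have the wrong P\'olya theorem. The paper does \emph{not} use P\'olya's result on integer-valued entire functions of exponential type; it uses P\'olya's criterion for Chebyshev systems (see \secref{sec:chebyshev}): if $f_1,\dots,f_\mu$ have all partial Wronskians $W_1,\dots,W_\mu$ nonvanishing on an interval, then any nontrivial linear combination has fewer than $\mu$ zeros there. There is no integrality input and no growth-rate hypothesis; the criterion is applied to the restrictions of the monomials $x^iy^j$, $i+j\le k$, to a trajectory of the vector field $\xi=P_y\partial_x-P_x\partial_y$ on $\Gamma$. The integrality of the rational points plays no role at this stage.

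Second, you locate the saving of a power of $d$ in the parametrization step, and this is backwards. The parametrization in \secref{sec:param} produces $d^2\poly(k)$ pieces (with $k=\log H$), which is the \emph{same} order as in earlier approaches; that step is not the novelty. The saving happens \emph{after} the determinant method has put $\Gamma(\Q,H)$ inside $d^2\poly(k)$ auxiliary curves $\{Q_j=0\}$ of degree $k$. Normally one bounds $\#(\Gamma\cap\{Q_j=0\})$ by $dk$ via B\'ezout, picking up an extra factor $d$. The paper instead removes the zeros of the partial $\xi$-Wronskians $W_1,\dots,W_\mu$ from $\Gamma$ (these are polynomials of degree $d\cdot\poly(k)$, so by B\'ezout this costs only $d^2\poly(k)$ points), and on each remaining arc P\'olya's Chebyshev criterion gives $\#(\Gamma^\circ\cap\{Q=0\})<\mu\sim k^2$ rather than $dk$. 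This is exactly the replacement of $dk$ by $k^2$ that eliminates one power of $d$.

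You are also missing the case split that makes the whole argument work: one sets $k=\log H$ and treats $k\ge d$ by invoking the known bound~\eqref{eq:d4} (since then $d^4\le d^2(\log H)^2$), while the Chebyshev argument is run only when $k<d$. The hypothesis $k<d$ is essential, both so that no degree-$k$ polynomial vanishes identically on $\Gamma$ (hence the Wronskians $W_j$ are not identically zero on $\Gamma$) and so that $H^{2/d}=O(1)$, leaving a final bound $d^2\poly(\log H)$ in this regime. Finally, the parametrization uses finite $C^r$-norms with $r\sim k^2$, not analytic $C^kk!$-type bounds; no holomorphic extension is needed.
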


The bounds (\ref{eq:Sal}) follow from Theorem \ref{thm:main} (see Section \ref{sec:proof1}), and also the bounds (\ref{eq:main}) are optimal except for the constants
$c$ and $\kappa$, by \cite[Section 6]{CCDN-dgc}.

We briefly sketch the main argument for Theorem \ref{thm:main}, which is worked out in Sections \ref{sec:chebyshev} up to \ref{sec:proof1}.
Set $k:=\log H$ and $\Gamma':=\Gamma\cap[0,1]^2$. The case $k\ge d$
follows from the bounds (\ref{eq:d4}), so assume $k<d$. We
show in Section \ref{sec:param}, by establising an efficient smooth parametrization result for algebraic curves which refines
\cite{wilkie:pw-notes,me:pfaff-wilkie}, that $\Gamma'$ can be
parameterized using $d^2\poly(k)$ maps $\{\phi_j:(0,1)\to\Gamma'\}$
with unit $C^r$-norms where $r\sim k^2$ and where $\poly(k)$ means polynomial dependence in $k$, more precisely of the form $ck^e$ for some $c$ and $e$ (independent of $d$ and $\Gamma$). It then follows, essentially
by the Bombieri-Pila method, that $\Gamma(\Q,H)$ is contained in
$d^2 \poly(k)$ curves $\{Q_j=0\}$ of degree $k$. Note that the
$H^{2/d}$ factor that normally appears here is $O(1)$ when $k<d$. 

Since $k<d$ the intersection $\Gamma\cap\{Q_j=0\}$ is finite.  In the
standard approach, one would proceed from here by bounding the number of points on each $\Gamma\cap\{Q_j=0\}$ by $dk$ from
B\'ezout's theorem, giving the bound
\begin{equation}
  \#\Gamma(\Q,H) \le d^3 \poly(k).
\end{equation}
A key strength of our approach is an improvement of this final step,
replacing $dk$ from B\'ezout essentially by $k^2$.

To achieve this, we consider the restriction of the space of
polynomials of degree $k$ in $\R[x,y]$ to $\Gamma$. This is a linear
space, and a classic theorem of P\'olya \cite{polya} gives a criterion
for such a space to form a \emph{Chebyshev system},
see~\secref{sec:chebyshev}.

P\'olya shows that for $\cL$ to form a Chebyshev system it is enough
to check that certain Wronskians are non-vanishing. In our context,
these Wronskians turn out to be given by polynomials of degree
$d\cdot\poly(k)$ in $\R[x,y]$, none of them identically vanishing on
$\Gamma$. Removing all their zeros from $\Gamma$ and treating the
complement, we divide $\Gamma$ into $d^2\poly(k)$ pieces $\Gamma_j$
such that
\begin{equation}
    \# \big(   \Gamma_j\cap\{Q=0\} \big) < \frac{(k+1)(k+2)}2 \qquad \text{whenever
  } \deg Q\le  k.
\end{equation}
This eliminates one extra power of $d$ from the bound and allows us to finish the
proof of Theorem \ref{thm:main} in Section \ref{sec:proof1}.

\subsection{}Let us mention some extra context.
Natural variants for counting inside other fields than $\Q$ are
recently obtained in \cite{Sedunova,Vermeulen:p,Liu:Chunhui,Pared-Sas} for global fields, and, motivically in $\C((t))$, in \cite[Section 5]{CCL-PW}. Unfortunately, our methods do not seem to adapt to these more general settings as we use real analytic functions for both steps in our proof of Theorem \ref{thm:main}, namely the efficient smooth parameterizations, and, P\'olya's criterion.

Bounds for rational points of bounded height on curves lead to effective forms of Hilbert's irreducibility theorem, see
\cite{Walkowiak,DebesW,Pared-Sas:Hilbert}; our Theorem \ref{thm:main} with its quadratic dependence on $d$ leads naturally to corresponding improvements.

Question (\ref{eq:Sal}) about counting rational points of bounded height on algebraic curves and our positive answer to this question
sits in a rich history of results and techniques from
e.g.~\cite{bombieri-pila,Pila-density-curve,Heath-Brown-Ann,Broberg,Walkowiak,Walsh,CCDN-dgc,me:c-cells}
towards better control in $d$ and removal of $\varepsilon$. P\'olya's criterion and the ideas behind it have been used already many times in this context (see e.g.~\cite{Schmidt-points}, \cite{pila:density-Q}), but, it seems to be the first time P\'olya's criterion  is used to achieve optimal dependence on the degree.

Note that our Theorems \ref{thm:dcgdegree} and \ref{thm:0.4} settle Serre's question on \cite[page 178]{Serre-Mordell}
with $K=\Q$ for all degrees $d\ge 4$, the degree $4$ case being new, and sharpens our understanding of the degree $3$ case.

\subsection{Acknowledgments}

The authors G.B. and R.C. would like to thank the Royal Swedish
Academy for their hospitality during the Schock Prize Symposium 2022 for Jonathan Pila, where
this work began. It is also our pleasure to thank Per Salberger for
fruitful discussions during this event, and in particular for pointing
out the importance of the estimate~\eqref{eq:Sal} which is the main
topic of this paper. The author R.C. would like to thank Tim Browning, Wouter
Castryck, Philip Dittmann, Marcelo Paredes, Jonathan Pila, Per Salberger, and Roman
Sasyk for interesting discussions on the topics of the paper.


\section{Chebyshev systems and P\'olya's criterion}
\label{sec:chebyshev}

Let $f_1,\ldots,f_\mu:(a,b)\to\R$ be real analytic functions.
\begin{Def}[Partial Wronskians]\label{def:partial-wronskian}
  We define the partial Wronskian $W_j:(a,b)\to\R$ to be
  \begin{equation}
    W_j = W(f_1,\ldots,f_j) = \det
    \begin{pmatrix}
      f_1 & \cdots & f_j \\
      f'_1 & \cdots & f'_j \\
      \vdots & \ddots & \vdots \\
      f_1^{(j-1)} & \cdots & f_j^{(j-1)}
    \end{pmatrix}.
  \end{equation}
\end{Def}

Recall the notion of a Chebyshev system.

\begin{Def}
  An $\R$-linear space $\cL$ of real-valued function functions on an
  interval $(a,b)$ is said to be a \emph{Chebyshev system} if the
  number of isolated zeros of any function in $\cL$ does not exceed
  $\dim\cL-1$.
\end{Def}

The following criterion for $\cL$ to be a Chebyshev system plays a
major role in our argument.

\begin{Thm*}[P\'olya's theorem \protect{\cite{polya}}]
  Let $f_1,\ldots,f_\mu\in\cL$ be an $\R$-basis for $\cL$, and suppose
  that the corresponding partial Wronskians $W_j$ are nowhere
  vanishing on $(a,b)$ for $j=1,\ldots,\mu$. Then $\cL$ is a Chebyshev
  system.
\end{Thm*}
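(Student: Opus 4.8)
The plan is to argue by induction on $\mu=\dim\cL$, using the non-vanishing of the top Wronskian $W_\mu$ to reduce the zero-counting problem for $\cL$ to the same problem for a space of dimension $\mu-1$. The base case $\mu=1$ is immediate: a function $c f_1$ with $c\ne 0$ has no zeros at all since $W_1=f_1$ is nowhere vanishing, and the zero function is excluded by convention; so the number of isolated zeros is $0=\mu-1$. For the inductive step, suppose the statement holds for all Chebyshev-type systems of dimension $\mu-1$ satisfying the Wronskian hypothesis, and let $f=\sum_{i=1}^\mu a_i f_i$ be a nonzero element of $\cL$ that we may assume (after relabelling, or rather after noting $a_\mu\ne 0$ in the generic case; the case $a_\mu=0$ is handled directly by the induction hypothesis applied to $\operatorname{span}(f_1,\dots,f_{\mu-1})$, whose partial Wronskians are exactly $W_1,\dots,W_{\mu-1}$) has $a_\mu\ne 0$.

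The key device is the logarithmic-derivative / Wronskian reduction. Since $W_\mu$ is nowhere vanishing, the functions $f_1,\dots,f_\mu$ are linearly independent as analytic functions, and one can consider the auxiliary function
\begin{equation}
  g = \frac{W(f_1,\dots,f_{\mu-1},f)}{W_{\mu-1}}.
\end{equation}
Here $W(f_1,\dots,f_{\mu-1},f)=a_\mu W_\mu$ by multilinearity and alternation of the determinant in its last column (all the terms $a_i W(f_1,\dots,f_{\mu-1},f_i)$ with $i<\mu$ vanish because of a repeated column), so $g = a_\mu W_\mu/W_{\mu-1}$ is itself nowhere vanishing on $(a,b)$. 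The heart of the argument is the classical identity expressing $f$ via successive integrations against the ratios $W_{j-1}W_{j+1}/W_j^2$ (an iterated Wronskian/ODE factorization, essentially Polya's own computation), which shows that between any two consecutive zeros of $f$ the function $W(f_1,\dots,f_{\mu-1},f)$ — equivalently $g$ — must vanish at least once by Rolle's theorem applied to the relevant quotient. Unwinding this: if $f$ had $\mu$ isolated zeros, then an associated function built from $f$ and $f_1,\dots,f_{\mu-2}$ lying in a $(\mu-1)$-dimensional space with partial Wronskians $W_1,\dots,W_{\mu-1}$ would be forced to have at least $\mu-1$ zeros, contradicting the induction hypothesis once we check that function is nonzero (which follows from $a_\mu\ne 0$ and $W_\mu\ne 0$).

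The step I expect to be the main obstacle is making the Rolle-type reduction precise, i.e.\ correctly identifying the $(\mu-1)$-dimensional Chebyshev system to which the inductive hypothesis is applied and verifying that passing from $f$ to the reduced function strictly decreases the relevant dimension while only dropping the zero count by one. Concretely, one wants the clean statement: if $f$ is analytic on $(a,b)$ with at least $N$ isolated zeros then $W(f_1,\dots,f_{\mu-1},f)/W_{\mu-1}$ — a nowhere-zero multiple of $W_\mu$ when $f\in\cL$, but a genuinely nontrivial analytic function when we instead track a sub-combination — has at least $N-(\mu-1)$ isolated zeros; iterating the analogous Wronskian-quotient operation $\mu-1$ times peels $f_1,\dots,f_{\mu-1}$ off one at a time, each time invoking Rolle on a quotient whose denominator is a lower partial Wronskian (hence nonvanishing), and lands on a nowhere-vanishing function that is nonetheless required to have $N-(\mu-1)$ zeros, forcing $N\le\mu-1$. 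Getting the bookkeeping of which Wronskians appear as denominators exactly right, and handling the degenerate cases where $f$ involves fewer than all the $f_i$, is the delicate part; everything else is the standard linear algebra of determinants and repeated application of Rolle's theorem to ratios of analytic functions.
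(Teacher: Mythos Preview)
The paper does not give its own proof of P\'olya's theorem; it only states the result and refers to \cite{polya} and \cite[p.~913]{ny:rolle} for a complete argument. So there is no in-paper proof to compare against. That said, your overall strategy---induction on $\mu$ via Rolle's theorem applied to quotients by nonvanishing Wronskian ratios---is exactly the classical one found in those references, and your final paragraph states it correctly: one peels off $f_1,\ldots,f_{\mu-1}$ one at a time by the operations $h\mapsto (h/\rho_j)'$ with $\rho_j=W_j/W_{j-1}$, each step losing at most one zero by Rolle, and after $\mu-1$ steps one lands on a nonzero multiple of $W_\mu/W_{\mu-1}$, which has no zeros; hence $N-(\mu-1)\le 0$.

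There is, however, a genuine muddle in your middle paragraph that you should clean up. You write that ``between any two consecutive zeros of $f$ the function $W(f_1,\dots,f_{\mu-1},f)$ --- equivalently $g$ --- must vanish at least once by Rolle's theorem applied to the relevant quotient.'' This is false as stated, and contradicts what you observed two lines earlier: $g=a_\mu W_\mu/W_{\mu-1}$ is \emph{nowhere} vanishing. A single application of Rolle to $f/f_1$ produces a zero of $(f/f_1)'$ between consecutive zeros of $f$, not a zero of $g$; reaching $g$ requires all $\mu-1$ iterations. The clean inductive step is: set $\tilde f_i=(f_i/f_1)'$ for $i=2,\ldots,\mu$; the identity $W(\tilde f_2,\ldots,\tilde f_{j+1})=W_{j+1}/W_1^{\,j+1}$ (this is the bookkeeping you flagged as delicate, and it is the one nontrivial computation) shows the partial Wronskians of the new system are nonvanishing; then apply the inductive hypothesis to $(f/f_1)'\in\operatorname{span}(\tilde f_2,\ldots,\tilde f_\mu)$. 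This replaces the two interleaved arguments with one step and makes the separate treatment of $a_\mu=0$ unnecessary.
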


We note that \cite{polya} does not give this exact formulation, though
it easily follows from the material presented there. For a full proof
see \cite[P\'olya's theorem,~p.913]{ny:rolle}.

\section{Chebyshev systems on $\Gamma$}
\label{sec:chebyshev-Gamma}

Let $P\in\R[x,y]$ be irreducible of degree $d$ and set
$\Gamma\subset\R^2$ to be $\Gamma:=\{P=0\}$. We denote by
$L_P:\R[x,y]\to\R[x,y]$ the differential operator
\begin{equation}
  L=L_P = P_y \pd{}x - P_x \pd{}y.
\end{equation}
We may also think of $L$ as a vector field $\xi:\R^2\to T\R^2$. Note
that $\xi$ restricts to a vector field $\xi:\Gamma\to T\Gamma$.

Fix some $k<d$. Denote by $\R[x,y]_{\le k}$ the set of
polynomials of degree at most $k$ and set
\begin{equation}\label{eq:mu}
  \mu=\mu(k)=\dim\R[x,y]_{\le k} = \frac{(k+2)(k+1)}2.
\end{equation}

Let $f_1,\ldots,f_\mu$ denote a basis, for example the monomial basis,
for $\R[x,y]_{\le k}$.

\begin{Def}[Partial $\xi$-Wronskians]\label{def:partial-xi-wronskian}
  We define the partial $\xi$-Wronskian $W_j\in\R[x,y]$ to be
  \begin{equation}
    W_j = W(f_1,\ldots,f_j) = \det
    \begin{pmatrix}
      f_1 & \cdots & f_j \\
      L f_1 & \cdots & L f_j \\
      \vdots & \ddots & \vdots \\
      L^{j-1} f_1 & \cdots & L^{j-1} f_j
    \end{pmatrix}.
  \end{equation}
\end{Def}

\begin{Rem}\label{rem:wronskians}
  Let $\phi:(a,b)\to\R^2$ be a trajectory of $\xi$, i.e. a curve
  satisfying
  \begin{equation}
    \phi(t)'=\xi(\phi(t)).
  \end{equation}
  Then for $t\in(a,b)$, the partial $\xi$-Wronskian $W_j$ at $\phi(t)$
  agrees by definition with the partial Wronskian of
  $f_1\circ\phi,\ldots,f_j\circ\phi$ at $t$.
\end{Rem}

We have the following elementary degree estimate.

\begin{Prop}\label{prop:Wj-degs}
  For $j=1,\ldots,\mu$,
  \begin{equation}
    \deg W_j \le j(k+jd).
  \end{equation}
\end{Prop}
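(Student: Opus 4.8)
The plan is to track how the operator $L$ raises degrees and then bound the degree of the determinant entrywise, exploiting that all entries in the same row of the Wronskian matrix have a common degree bound. First I would observe that $L = P_y\,\partial_x - P_x\,\partial_y$ sends $\R[x,y]_{\le m}$ into $\R[x,y]_{\le m + d - 1}$: the coefficients $P_x, P_y$ have degree $\le d-1$, and differentiating drops the degree of $f$ by one, so $\deg(Lf) \le (m-1) + (d-1) = m + d - 2$, in fact even a bit better than the crude $m+d$. Iterating, $\deg(L^i f) \le \deg f + i(d-1)$ for any $f$, and in particular for $f = f_\ell$ with $\deg f_\ell \le k$ we get $\deg(L^i f_\ell) \le k + i(d-1) \le k + id$.

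Next I would expand the determinant $W_j = \det(L^{i-1} f_\ell)_{1\le i,\ell\le j}$ as a sum over permutations $\sigma \in S_j$ of the products $\pm\prod_{i=1}^{j} L^{i-1} f_{\sigma(i)}$. Each such product is a polynomial of degree at most $\sum_{i=1}^{j}\bigl(k + (i-1)d\bigr) = jk + d\sum_{i=1}^{j}(i-1) = jk + d\cdot\frac{j(j-1)}2$. Since this bound is the same for every permutation, it bounds $\deg W_j$ as well: $\deg W_j \le jk + \tfrac{j(j-1)}{2}\,d$. This is already slightly stronger than the claimed $j(k+jd) = jk + j^2 d$, since $\tfrac{j(j-1)}2 \le j^2$, so the stated inequality follows a fortiori. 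I would present the argument so as to land cleanly on the stated form $j(k+jd)$ without fuss over the sharper constant.

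The only point requiring any care — and the nearest thing to an obstacle, though it is mild — is making sure the degree-raising estimate for $L$ is applied correctly when $f_\ell$ is a low-degree monomial whose partial derivative might vanish or have still lower degree; but since we only need an upper bound, the inequality $\deg(L^i f_\ell) \le k + id$ holds in all cases (with the convention $\deg 0 = -\infty$, which only helps). A second bookkeeping point is that row $i$ of the matrix (for $i = 1, \dots, j$) consists of the functions $L^{i-1} f_\ell$, so the exponent of $d$ picked up in row $i$ is $i-1$, not $i$; summing $\sum_{i=1}^j (i-1)$ rather than $\sum_{i=1}^j i$ is what keeps the bound at $\tfrac{j(j-1)}2 d$ and hence safely below $j^2 d$. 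With these observations in place the proposition is immediate from the permutation expansion of the determinant.
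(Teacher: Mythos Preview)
Your argument is correct and is precisely the direct computation the paper alludes to; the paper's own proof consists solely of the phrase ``By direct computation,'' and your entrywise degree tracking followed by the permutation expansion of the determinant is exactly what that phrase unpacks to. Your bound $\deg W_j \le jk + \tfrac{j(j-1)}{2}d$ is indeed a bit sharper than the stated $j(k+jd)$, but this refinement is not needed anywhere in the paper.
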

\begin{proof}
  By direct computation.
\end{proof}

Let $\Sigma_k\subset\R^2$ be given by
\begin{equation}
  \Sigma_k := \{P_x=P_y=P=0\} \cup \{W_1=0\}\cup\cdots\cup\{W_\mu=0\}.
\end{equation}

\begin{Lem}\label{lem:Sigma-size}
  We have
  \begin{equation}
    \#(\Gamma\cap\Sigma_k) \le d^2\poly(k).
  \end{equation}
\end{Lem}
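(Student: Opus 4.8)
The plan is to bound the number of points on $\Gamma$ lying in each of the finitely many components making up $\Sigma_k$, and sum. The set $\Sigma_k$ is a union of $\mu+1$ algebraic sets: the singular locus $\{P_x=P_y=P=0\}$ of $\Gamma$, and the zero sets $\{W_j=0\}$ for $j=1,\dots,\mu$. Since $\mu = \mu(k) = (k+1)(k+2)/2 = \poly(k)$, it suffices to show that each of these meets $\Gamma$ in at most $d\cdot\poly(k)$ points (indeed the $j$-th one will contribute roughly $j\cdot d\cdot k$, and summing over $j\le\mu$ gives $d\cdot\poly(k)$; then multiplying by the number $\mu+1$ of sets gives $d\cdot\poly(k)$ again — actually even just $d\,\poly(k)$, which is bounded by $d^2\poly(k)$, so the stated bound is comfortably met).

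First I would handle the singular locus. Since $P$ is irreducible of degree $d$, the curve $\Gamma=\{P=0\}$ has only finitely many singular points, and by B\'ezout applied to, say, $P_x$ and $P_y$ (which have no common component with $P$ since $P$ is irreducible and $\deg P_x,\deg P_y<d$, unless $P_x$ or $P_y$ vanishes identically — an easy special case handled directly), the number of points in $\{P_x=P_y=P=0\}$ is at most $(d-1)^2 = d\cdot\poly(k)$ (here $\poly(k)$ is just a constant, fine). The key point needed for the $W_j$ case is that \emph{none of the $W_j$ vanishes identically on $\Gamma$}: this is exactly the non-degeneracy that P\'olya's criterion will later require, and here I would invoke (or prove in a preliminary lemma) that the restrictions of $f_1,\dots,f_\mu$ to $\Gamma$ are linearly independent as functions — which holds because $P$ is irreducible of degree $d>k$, so no polynomial of degree $\le k$ other than $0$ vanishes on $\Gamma$ — and that a basis of an $\R$-linear space of real analytic functions on a connected real analytic curve has nowhere-identically-vanishing partial Wronskians along trajectories, hence $W_j\not\equiv 0$ on $\Gamma$ (using Remark \ref{rem:wronskians} to pass between the $\xi$-Wronskian on $\Gamma$ and the ordinary Wronskian along a trajectory of $\xi$; one must also note that $\xi$ has isolated zeros on $\Gamma$, namely exactly at the singular points, so trajectories foliate the smooth part of $\Gamma$).

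Once we know $W_j\not\equiv 0$ on $\Gamma$ and $P$ is irreducible, $W_j$ and $P$ share no common component, so B\'ezout gives
\begin{equation}
  \#\big(\Gamma\cap\{W_j=0\}\big) \le d\cdot\deg W_j \le d\cdot j(k+jd) \le d\cdot\mu(k+\mu d),
\end{equation}
using Proposition \ref{prop:Wj-degs}. Here the term $j\cdot j d \cdot d = j^2 d^2$ is the dominant one, and $j\le\mu=\poly(k)$, so this is at most $d^2\poly(k)$. Summing over $j=1,\dots,\mu$ and adding the $(d-1)^2$ from the singular locus yields
\begin{equation}
  \#(\Gamma\cap\Sigma_k) \le (d-1)^2 + \sum_{j=1}^{\mu} d\cdot j(k+jd) \le d^2\poly(k),
\end{equation}
as desired.

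The main obstacle is not the B\'ezout counting, which is routine, but establishing rigorously that each partial $\xi$-Wronskian $W_j$ is not identically zero on $\Gamma$. This requires care: one must argue that the monomials of degree $\le k$ restrict to $\R$-linearly independent real analytic functions on (each branch of) $\Gamma$, and then that linear independence of real analytic functions on an interval forces their full Wronskian to be not identically zero — together with the translation, via Remark \ref{rem:wronskians}, between Wronskians along a trajectory of $\xi$ and the polynomial $W_j$ evaluated on $\Gamma$, plus the observation that the vector field $\xi$ is non-singular away from the finitely many singular points of $\Gamma$ so that genuine trajectories exist through a dense set of points. I would isolate this non-vanishing as the heart of the lemma and keep the degree bookkeeping brief.
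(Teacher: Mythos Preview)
Your proposal is correct and follows essentially the same approach as the paper: bound $\deg W_j$ via Proposition~\ref{prop:Wj-degs}, show each $W_j$ does not vanish identically on $\Gamma$ (because otherwise $f_1,\dots,f_j$ would be $\R$-linearly dependent on $\Gamma$, giving a nonzero polynomial of degree $\le k<d$ vanishing on $\Gamma$), and then apply B\'ezout. One small point: to bound the singular locus it is cleaner to apply B\'ezout to $P$ and $P_x$ (giving $d(d-1)$) rather than to $P_x$ and $P_y$, since the latter pair may in principle share a common factor even when $P$ is irreducible.
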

\begin{proof}
  This follows from Proposition~\ref{prop:Wj-degs} and B\'ezout's
  theorem. Note that no $W_j$ can vanish identically on $\Gamma$, as
  by a standard result on Wronskians this would imply that the
  restrictions of $f_1,\ldots,f_j$ to $\Gamma$ are $\R$-linearly
  dependent. But such a linear combination would be a polynomial of
  degree $k$ vanishing identically on $\Gamma$, contradicting our
  assumption $k<d$.
\end{proof}

The following is our main proposition for this section.

\begin{Prop}\label{prop:comp-chebyshev}
  Let $\Gamma^\circ$ 
  be a connected
  component of $\Gamma\setminus\Sigma_k$. Then for any nonzero $Q\in\R[x,y]_{\le k}$ we have
  \begin{equation}
    \# \big( \Gamma^\circ\cap\{Q=0\} \big) < \mu.
  \end{equation}
\end{Prop}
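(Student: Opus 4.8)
The plan is to reduce Proposition~\ref{prop:comp-chebyshev} to P\'olya's theorem applied to a suitable Chebyshev system on $\Gamma^\circ$. First I would observe that the polynomials $f_1,\ldots,f_\mu$ (the monomial basis of $\R[x,y]_{\le k}$) restrict, via a trajectory $\phi\colon(a,b)\to\Gamma^\circ$ of the vector field $\xi$, to real analytic functions $f_1\circ\phi,\ldots,f_\mu\circ\phi$ on $(a,b)$. By Remark~\ref{rem:wronskians}, the partial Wronskians of these pulled-back functions agree with the partial $\xi$-Wronskians $W_j$ evaluated along $\phi$. Since $\Gamma^\circ$ is a connected component of $\Gamma\setminus\Sigma_k$ and $\Sigma_k$ contains the zero loci of all the $W_j$, none of these Wronskians vanishes anywhere on $\Gamma^\circ$, hence none of the pulled-back Wronskians vanishes on $(a,b)$. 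P\'olya's theorem then tells us that the span of $f_1\circ\phi,\ldots,f_\mu\circ\phi$ is a Chebyshev system on $(a,b)$, so any nonzero real-linear combination of them has at most $\mu-1$ isolated zeros in $(a,b)$.

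The second step is to transfer this back to $\Gamma^\circ$. Removing the singular locus $\{P_x=P_y=P=0\}$ (also part of $\Sigma_k$) guarantees $\Gamma^\circ$ is a smooth one-dimensional real analytic manifold on which $\xi$ is a non-vanishing vector field, so $\Gamma^\circ$ is covered by trajectories of $\xi$; in fact, after choosing an orientation, a single maximal trajectory $\phi$ parametrizes a given connected arc. A nonzero $Q\in\R[x,y]_{\le k}$ is a linear combination $\sum a_i f_i$, and because $k<d$ and $P$ is irreducible of degree $d$, the restriction $Q\rest{\Gamma}$ is not identically zero (its zero set on $\Gamma$ is finite), so in particular $Q\circ\phi$ is a nonzero analytic function on $(a,b)$ — hence its zeros are isolated. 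Therefore $\#\{t\in(a,b):Q(\phi(t))=0\}\le\mu-1$, and since $\phi$ maps $(a,b)$ onto $\Gamma^\circ$ (injectively, as a trajectory of a non-vanishing vector field on a one-manifold), we conclude $\#(\Gamma^\circ\cap\{Q=0\})\le\mu-1<\mu$.

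The main technical point to handle carefully is the parametrization of $\Gamma^\circ$ by trajectories of $\xi$ and the claim that a single trajectory suffices: one needs that $\Gamma^\circ$, being connected and smooth of dimension one, is diffeomorphic either to an interval or to a circle, and in either case is the image of one orbit of the non-vanishing vector field $\xi$ (in the circle case the orbit is periodic and one restricts to one period, which does not change the point count). A minor subtlety is that the claim ``$Q\rest{\Gamma}\not\equiv 0$'' uses irreducibility of $P$ together with $\deg Q\le k<d=\deg P$, exactly as in the proof of Lemma~\ref{lem:Sigma-size}; this also ensures the zeros of $Q\circ\phi$ are isolated rather than a whole subinterval, so that the Chebyshev bound on isolated zeros genuinely bounds $\#(\Gamma^\circ\cap\{Q=0\})$. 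Everything else is a direct application of the results already established.
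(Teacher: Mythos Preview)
Your proposal is correct and follows essentially the same route as the paper's proof: parametrize $\Gamma^\circ$ by a trajectory of $\xi$, invoke Remark~\ref{rem:wronskians} to identify the partial $\xi$-Wronskians with the classical Wronskians along the trajectory, and then apply P\'olya's theorem to conclude that $Q\circ\phi$ has fewer than $\mu$ zeros. Your write-up is in fact slightly more careful than the paper's in that you explicitly address the circle case and the non-vanishing of $Q\rest\Gamma$.
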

\begin{proof}
  Let $f:(a,b)\to\Gamma^\circ$ be the time parametrization of $\Gamma$
  with respect to the vector field $\xi$, i.e.
  \begin{equation}
    f'(t) = \xi(f(t))
  \end{equation}
  where
  $a,b\in\R\cup\{-\infty,\infty\}$.  Indeed, a trajectory of
  $\xi$ passing through a point of
  $\Gamma^\circ$ is analytic and parameterizes all of
  $\Gamma^0$ by the theorem on existence and uniqueness of solutions of ordinary differential equation.  By Remark~\ref{rem:wronskians},
  $W_j\rest{\Gamma^\circ}$ is the classical Wronskian of the functions $f_1\circ
  f,\ldots,f_j\circ
  f$. By assumption, none of these Wronskians vanish on
  $(a,b)$. By P\'olya's theorem the functions $f_1\circ
  f,\cdots,f_\mu\circ
  f$ form a Chebyshev system on
  $(a,b)$. That is, every nontrivial linear combination, and in particular $Q\circ
  f$, has fewer than $\mu$ zeros.
\end{proof}

\section{Smooth parametrization}
\label{sec:param}

We continue with the notation of~\secref{sec:chebyshev-Gamma}. It will
be convenient in this section to use rational rescalings of $L,\xi$,
namely
\begin{align}
  L_x &:= \pd{}x - \frac{P_x}{P_y} \pd{}y, \\
  L_y &:= \pd{}y - \frac{P_y}{P_x} \pd{}x,
\end{align}
and the corresponding vector fields $\xi_x$, $\xi_y$.

\begin{Prop}\label{prop:Qxj-degs}
  Let $Q\in\R[x,y]$. Then for every $j\in\N$ there exist
  $Q_{xj},Q_{yj}\in\R[x,y]$ such that
  \begin{align}
    L_x^jQ = \frac{Q_{xj}}{P_y^{2j-1}}, \qquad \deg Q_{xj}\le \deg Q+2dj, \\
    L_y^jQ = \frac{Q_{yj}}{P_x^{2j-1}}, \qquad \deg Q_{yj}\le \deg Q+2dj.
  \end{align}
\end{Prop}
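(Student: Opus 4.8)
The plan is to prove both identities by induction on $j$, treating only the $L_x$ case since the $L_y$ case is symmetric (swap the roles of $x$ and $y$). For the base case $j=0$ we may simply take $Q_{x0}=Q$ and note $P_y^{-1}$ is the empty product, i.e. $L_x^0 Q = Q$; alternatively start at $j=1$, where $L_xQ = Q_x - (P_x/P_y)Q_y = (Q_xP_y - P_xQ_y)/P_y$, so $Q_{x1} = Q_xP_y - P_xQ_y$ has degree at most $(\deg Q - 1) + (d-1) \le \deg Q + 2d - 1 \le \deg Q + 2d$, matching the claimed bound with $2j-1 = 1$.

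For the inductive step, assume $L_x^jQ = Q_{xj}/P_y^{2j-1}$ with $\deg Q_{xj}\le \deg Q + 2dj$. Then I would apply $L_x$ to the quotient using the quotient rule, writing
\begin{equation}
  L_x^{j+1}Q = L_x\!\left(\frac{Q_{xj}}{P_y^{2j-1}}\right)
  = \frac{(L_xQ_{xj})\,P_y^{2j-1} - Q_{xj}\,(2j-1)P_y^{2j-2}(L_xP_y)}{P_y^{2(2j-1)}}.
\end{equation}
Now $L_xQ_{xj} = (R)/P_y$ and $L_xP_y = (S)/P_y$ for polynomials $R,S$ with $\deg R \le \deg Q_{xj} + 2d$ and $\deg S \le (d-1) + 2d = 3d - 1$, by the same one-step computation as in the base case (concretely $R = (Q_{xj})_x P_y - P_x (Q_{xj})_y$ and $S = P_{xy}P_y - P_x P_{yy}$). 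Substituting and clearing the extra $P_y$ from the denominators, the whole expression becomes a single polynomial over $P_y^{2(2j-1)+1} = P_y^{4j-1}$. The key point to check is that $P_y^{4j-1}$ is divisible by enough powers of $P_y$ to reduce the exponent down to $2(j+1)-1 = 2j+1$: indeed $4j - 1 - (2j+1) = 2j - 2 \ge 0$ for $j\ge 1$, and one verifies that the numerator is divisible by $P_y^{2j-2}$ — the first term $(L_xQ_{xj})P_y^{2j-1}$ contributes $P_y^{2j-2}$ after using $L_xQ_{xj}=R/P_y$, and the second term already carries $P_y^{2j-2}$ explicitly. So after cancellation we get $L_x^{j+1}Q = Q_{x,j+1}/P_y^{2j+1}$ with
\begin{equation}
  Q_{x,j+1} = R\,P_y^{2j-1}/P_y^{2j-2} \cdot(\text{normalization}) - (2j-1)\,Q_{xj}\,S,
\end{equation}
more cleanly: $Q_{x,j+1} = R P_y - (2j-1)Q_{xj}S$ (one tracks the exponents carefully here), whose degree is at most $\max(\deg R + d,\ \deg Q_{xj} + 3d - 1) \le \deg Q + 2dj + 2d + d = \deg Q + 2d(j+1) + d$ — so I would need to be slightly more careful with the bookkeeping to land exactly on $\deg Q + 2d(j+1)$ rather than a looser bound, likely by observing that the $x$- and $y$-derivatives drop the degree by one, compensating the extra $+d$'s.

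I expect the main obstacle to be precisely this degree bookkeeping in the inductive step: the naive application of the quotient rule produces denominators of the form $P_y^{2(2j-1)}$ and numerators whose degree, estimated crudely, exceeds the target $\deg Q + 2d(j+1)$. The resolution is to be disciplined about two cancellations happening simultaneously — the power of $P_y$ in the denominator collapses from $4j-1$ to $2j+1$ because $P_y$ divides $L_xQ_{xj}$ and $L_xP_y$, and every application of $\partial_x$ or $\partial_y$ lowers polynomial degree by exactly one, which is what makes the bound $+2d$ per step (rather than $+3d$) correct. A clean way to organize this is to prove the slightly stronger statement that $L_x^jQ = Q_{xj}/P_y^{2j-1}$ where additionally $Q_{xj}$ has a controlled form, or simply to carry out the single-step identity $L_x(A/P_y^m) = (A_xP_y - mAP_x - m A P_y^{\,-1}\cdots)/P_y^{m+2}$ once and for all as a lemma and feed it the inductive hypothesis; the degree estimate $\deg(\text{new numerator}) \le \deg A + 2d$ relative to the increment $m\mapsto m+2$ in the denominator exponent is what propagates cleanly. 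Everything else is routine, and the $L_y$ statement follows by the symmetry $x\leftrightarrow y$.
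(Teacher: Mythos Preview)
Your approach is exactly the paper's: induction on $j$ via the quotient rule, arriving at $Q_{x,j+1}=RP_y-(2j-1)Q_{xj}S$ with $R=(Q_{xj})_xP_y-P_x(Q_{xj})_y$ and $S=P_{yx}P_y-P_xP_{yy}$, over the denominator $P_y^{2j+1}$. The bookkeeping worry you flag is not a real obstacle and requires no further cancellation: you already computed in the base case that one application of $L_x$ to a polynomial $A$ produces a numerator of degree at most $\deg A + d - 2$ (not $\deg A + 2d$), so taking this sharp bound for both $R$ and $S$ gives $\deg(RP_y)\le\deg Q_{xj}+2d-3$ and $\deg(Q_{xj}S)\le\deg Q_{xj}+2d-3$, hence $\deg Q_{x,j+1}\le\deg Q+2d(j+1)$ directly.
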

\begin{proof}
  Follows by induction on $j$ using
  \begin{multline}
    L_x^{j+1}y=L_x\Big(\frac{Q_{xj}}{P_y^{2j-1}}\Big)=
    \frac{P_yL_xQ_{xj}-(2j-1)Q_{xj}L_xP_y}{P_y^{2j}}=\\
    \frac{P_y^2(Q_{xj})_x+P_yP_x(Q_{xj})_y-(2j-1)Q_{xj}(P_yP_{yx}-P_xP_{yy})}{P_y^{2j+1}}
  \end{multline}
  and symmetrically for $L^j_y$.
\end{proof}

Let $r\in\N$ and let $\Pi_r\subset\Gamma$ be the collection of
isolated zeros of each of the following polynomials restricted to
$\Gamma$:
\begin{gather}
  P_x,P_y,P_x\pm P_y,x\pm1,y\pm1,\\
  y_{xj},x_{yj} \qquad \text{for }j=0,\ldots,r,
\end{gather}
with notation from Proposition \ref{prop:Qxj-degs} with $Q=y$ and $Q=x$.

The following is an immediate consequence of
Proposition~\ref{prop:Qxj-degs} and B\'ezout's theorem.

\begin{Lem}\label{lem:Pi-size}
  We have
  \begin{equation}
    \# \Pi_r \le d^2\poly(r).
  \end{equation}
\end{Lem}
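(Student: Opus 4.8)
The plan is to bound the number of points in $\Pi_r$ by estimating, for each polynomial in the defining list, its degree and then applying B\'ezout's theorem against $\Gamma=\{P=0\}$, which has degree $d$. First I would observe that $\Pi_r$ is a finite union of sets of the form $\{Q=0\}\cap\Gamma$ where $Q$ ranges over the listed polynomials, and in each case we only count isolated zeros on $\Gamma$, so $Q$ may be assumed not to vanish identically on $\Gamma$ (if it does, it contributes no isolated zeros, or equivalently we simply discard it). For such $Q$, the number of points in $\{Q=0\}\cap\Gamma$ is at most $d\cdot\deg Q$ by B\'ezout, since $P$ is irreducible and does not divide $Q$.

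Next I would go through the list. The polynomials $P_x,P_y,P_x\pm P_y$ have degree at most $d$, contributing $O(d^2)$ points total; the polynomials $x\pm1,y\pm1$ have degree $1$, contributing $O(d)$ points. The main contribution comes from the $y_{xj}$ and $x_{yj}$ for $j=0,\ldots,r$: by Proposition~\ref{prop:Qxj-degs} applied with $Q=y$ and $Q=x$ (both of degree $1$), we have $\deg y_{xj}\le 1+2dj$ and $\deg x_{yj}\le 1+2dj$. Hence each such hypersurface meets $\Gamma$ in at most $d(1+2dj)\le d\cdot(1+2dr)$ points, and summing over the $O(r)$ values of $j$ (for each of the two families) gives a bound of the form $O(d^2 r^2)$, which is $d^2\poly(r)$.

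Adding all contributions, $\#\Pi_r\le d^2\poly(r)$ as claimed; more precisely the dominant term is $\sum_{j=0}^{r} 2d(1+2dj) = O(d^2 r^2)$, with the remaining terms contributing only $O(d^2)$. I do not anticipate a genuine obstacle here: the only point requiring a word of care is the justification that none of the listed polynomials vanishes identically on $\Gamma$ — and this is automatic in the sense that identically-vanishing ones contribute zero isolated zeros, so they may be harmlessly omitted from the count. Everything else is the routine combination of the degree bounds from Proposition~\ref{prop:Qxj-degs} with B\'ezout's theorem, exactly as in the proof of Lemma~\ref{lem:Sigma-size}.
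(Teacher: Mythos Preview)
Your proposal is correct and follows exactly the approach the paper intends: the paper states the lemma as an ``immediate consequence of Proposition~\ref{prop:Qxj-degs} and B\'ezout's theorem'' without further detail, and you have spelled out that argument accurately, including the handling of polynomials that might vanish identically on $\Gamma$.
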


Now follows the main proposition for this section.
\begin{Prop}\label{prop:comp-param}
  Let $\Gamma^\circ\subset(\Gamma\cap[0,1]^2)\setminus\Pi_r$ be a
  connected component. Then for $N=\poly(r)$ there exist $C^r$ maps
  $\phi_1,\ldots,\phi_N:(0,1)\to\Gamma^\circ$ with $C^r$-norm
  $\norm{\phi_j}_r\le1$ for $j=1,\ldots,N$ and
  \begin{equation}
    \Gamma^\circ = \bigcup_{j=1}^N \Im\phi_j.
  \end{equation}
\end{Prop}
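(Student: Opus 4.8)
The plan is to parametrize $\Gamma^\circ$ using the two rescaled vector fields $\xi_x$ and $\xi_y$, choosing in each region whichever of $P_x,P_y$ is bounded away from zero, and then to estimate the $C^r$-norms of the resulting flow maps via Proposition~\ref{prop:Qxj-degs}. First I would use the removal of the zero sets of $P_x$, $P_y$, and $P_x\pm P_y$ from $\Gamma^\circ$: on a connected component of $\Gamma\cap[0,1]^2$ minus $\Pi_r$, each of $P_x,P_y$ has constant sign, and moreover $|P_x|$ and $|P_y|$ are comparable (no sign change of $P_x\pm P_y$), so that along the component the $x$-coordinate is a monotone function of arc length and likewise for $y$; at least one of them, say $x$, ranges over an interval of length at least, say, $1/\sqrt2$ after possibly subdividing. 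Reparametrizing by $x$ itself, the curve is a graph $y=g(x)$ over an interval, and the derivatives of $g$ with respect to $x$ are exactly the iterates $L_x^j y$ evaluated along the curve, which by Proposition~\ref{prop:Qxj-degs} are ratios $y_{xj}/P_y^{2j-1}$ of polynomials.

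The key step is then the standard rescaling-to-unit-norm argument (as in Wilkie's notes and \cite{me:pfaff-wilkie}): I would cover the $x$-interval by $\poly(r)$ subintervals on each of which all the derivatives $g',\ldots,g^{(r)}$ are monotone, and on each such subinterval apply an affine reparametrization of the source $(0,1)$ together with a rescaling so that the resulting map $\phi_j$ has all derivatives up to order $r$ bounded by $1$. The point making this work here — and giving the $\poly(r)$ rather than a $d$-dependent count — is that the number of monotonicity intervals of each $g^{(j)}$ is controlled by the number of zeros on $\Gamma$ of the numerator $y_{xj}$ and of $P_y$; but all of these zeros have already been thrown into $\Pi_r$, so on $\Gamma^\circ$ there are none, and each $g^{(j)}$ for $j\le r$ is already monotone. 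Hence a single affine reparametrization (or a bounded number, coming from the finitely many sign patterns and from splitting the $x$-range into $O(1)$ pieces of length $\le 1$) suffices, and $N=\poly(r)$ — indeed one can be quite generous here since we only need polynomial dependence.

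Concretely, after reducing to the graph $y=g(x)$ on an interval $I\subset[0,1]$ with every $g^{(j)}$, $j\le r$, monotone and nonvanishing, I would set $\psi(t)=g(x_0+Lt)$ for the affine map sending $(0,1)$ onto $I$ with $L=|I|\le1$, and then define $\phi(t)=(x_0+Lt,\,\psi(t))$; the derivative $\phi^{(j)}$ has $x$-component $L^j\delta_{j1}$ (already $\le1$) and $y$-component $L^j g^{(j)}$. The monotone functions $L^j g^{(j)}$ attain their sup at an endpoint, and a standard interpolation/telescoping estimate (Lemma of the Wilkie–Gabrielov–Yomdin type, or simply the observation that consecutive derivatives are related through the ODE $g'=-P_x/P_y$ whose coefficient is bounded on $\Gamma^\circ$ by the removal of $P_x\pm P_y$-zeros) shows that after a further bounded rescaling of $t$ one gets $\norm{\phi}_r\le1$; the cost is replacing $(0,1)$ by finitely many affinely-rescaled subintervals, contributing another $\poly(r)$ factor. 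Running the symmetric construction with $\xi_y$ on the portion of $\Gamma^\circ$ where $y$ rather than $x$ is the good coordinate, and taking the union, gives the $N=\poly(r)$ maps covering $\Gamma^\circ$.

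The main obstacle I anticipate is the quantitative $C^r$-norm bound: going from "each derivative $g^{(j)}$, $j\le r$, is individually monotone and finite" to "there is a single affine reparametrization making all of $\norm{\phi^{(1)}}_\infty,\ldots,\norm{\phi^{(r)}}_\infty$ simultaneously $\le1$" is exactly the technical heart of smooth parametrization, and one must be careful that the number of reparametrizations needed does not secretly depend on $d$. The resolution is that the relation $L_x g^{(j)} = g^{(j+1)}$ together with the uniform bound $|P_x/P_y|\asymp1$ on $\Gamma^\circ$ lets one bound each $g^{(j)}$ in terms of $g^{(j-1)}$ and a factor depending only on $d$ through $\deg y_{xj}\le 1+2dj$; but since we have already localized so that $|P_y|$ is bounded below on $\Gamma^\circ$, these polynomial-in-$d$ degree bounds translate only into how small we must take the rescaling parameter, not into how many maps we need — and shrinking the rescaling parameter is free. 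Thus the count stays $\poly(r)$, independent of $d$, which is precisely what the subsequent argument requires.
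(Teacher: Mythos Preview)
Your approach is the same as the paper's: use the removal of the zeros of $P_x,P_y,P_x\pm P_y$ to write $\Gamma^\circ$ as a graph $y=g(x)$ (or $x=g(y)$) with $|g'|<1$, observe that the removal of the zeros of $y_{xj}$ (resp.\ $x_{yj}$) for $j\le r$ forces each $g^{(j)}$ to have constant sign on the base interval, and then invoke the classical reparametrization lemma (Wilkie/Gromov--Yomdin, as in \cite[Theorem~5.7]{wilkie:pw-notes}) to get $\poly(r)$ unit-$C^r$ charts. Paragraphs~1--3 of your sketch carry this out correctly.

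Your paragraph~4, however, misidentifies the reason $d$ does not enter. The lemma you invoke requires \emph{only} that $|g'|\le1$ and that $g',\ldots,g^{(r+1)}$ have constant sign; it needs no quantitative bound on the higher derivatives, and outputs $\poly(r)$ pieces regardless of how large those derivatives are. That is why $d$ is invisible. Your proposed resolution --- that the degree-in-$d$ bounds only affect the rescaling parameter and ``shrinking the rescaling parameter is free'' --- is wrong: shrinking the subintervals forces more of them to cover $I$, so if the argument really required subintervals of length depending on $d$, the count $N$ would depend on $d$ too. Drop that paragraph and simply cite the lemma; the hypotheses it needs are exactly what the definition of $\Pi_r$ provides.
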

\begin{proof}
  By definition of $\Pi_r$, we can assume that $\Gamma^\circ$ is the
  graph of a real-analytic function $f:J\to(0,1)$ with $J\subset(0,1)$
  and $|f'|<1$ uniformly over $J$; or similarly with the roles of $x$
  and $y$ reversed. Moreover $f,f',\ldots,f^{(r+1)}$ have constant
  sign in $J$ (possibly zero, if one of the $y_{xj}$ or $x_{yj}$
  vanish identically on $\Gamma$). The proposition now follows from this in a classic way, see e.g.~the
  proof of \cite[Theorem~5.7]{wilkie:pw-notes}, which yields no more than $N=\poly(r)$ many $C^r$ maps as desired. 
\end{proof}

\begin{Cor}\label{cor:interpolation}
  Let $\Gamma^\circ$ be as in Proposition~\ref{prop:comp-param} and
  $H\ge 2$, $k>1$. Then $\Gamma^\circ(\Q,H)$ is contained in a union of
  $\poly(k)H^{O(1/k)}$ algebraic curves of degree at most $k$ in
  $\R^2$.
\end{Cor}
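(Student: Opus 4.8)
The plan is to run the standard Bombieri--Pila determinant method on each of the $C^r$ parametrizations $\phi_j$ provided by Proposition~\ref{prop:comp-param}, choosing $r$ of the order of $k^2$. First I would fix a parametrization $\phi = \phi_j : (0,1) \to \Gamma^\circ$ with $\norm{\phi}_r \le 1$, and consider the finitely many rational points of $\Gamma^\circ(\Q,H)$ lying on its image; each such point is $\phi(t)$ for some $t \in (0,1)$. Partition $(0,1)$ into roughly $H^{O(1/k)}$ subintervals of length comparable to $H^{-\theta}$ for a suitable $\theta$ depending on $k$; on each subinterval I claim the corresponding rational points all lie on a single algebraic curve of degree at most $k$.

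To prove that claim, suppose for contradiction that some subinterval contains $\mu = \mu(k) = (k+1)(k+2)/2$ rational points $\phi(t_1), \ldots, \phi(t_\mu)$ not all lying on a common curve of degree $\le k$. Form the $\mu \times \mu$ matrix whose rows are the values of the monomials $x^a y^b$, $a+b \le k$, at these points; its determinant $\Delta$ is a nonzero integer times a bounded power of the height denominators, hence $|\Delta| \ge H^{-O(k^2)}$ from below (the exponent being the total degree of all monomials times the number of points, which is $O(k) \cdot \mu = O(k^3)$, but organized correctly one gets the familiar bound). For the upper bound, pull everything back by $\phi$: the functions $g_{ab}(t) = (x\circ\phi)^a (y\circ\phi)^b$ are $C^r$ on $(0,1)$ with controlled norms because $\norm{\phi}_r \le 1$ and $a+b \le k \le r$. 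A Taylor expansion of the determinant of $(g_{ab}(t_i))$ around the left endpoint of the subinterval, exactly as in \cite{bombieri-pila} (or \cite{wilkie:pw-notes}), shows $|\Delta| \le C(k) \cdot (\text{interval length})^{\binom{\mu}{2} \cdot (\text{something})} \le C(k) H^{-\theta \cdot c k^4}$ roughly; choosing $\theta$ and $r \sim k^2$ so that the exponent beats $O(k^3)$ yields a contradiction. Hence the points on each subinterval do lie on a common degree-$\le k$ curve, and over all subintervals and all $j = 1, \ldots, N = \poly(r) = \poly(k)$ we cover $\Gamma^\circ(\Q,H)$ by $\poly(k) H^{O(1/k)}$ such curves.

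The main obstacle, and the only place requiring care, is the bookkeeping of exponents: one must choose the smoothness order $r$ (hence the polynomial factor $N=\poly(r)=\poly(k)$) and the subinterval count $H^{O(1/k)}$ so that the Taylor-vanishing order of the determinant exceeds the height-denominator growth, while keeping the number of subintervals of the stated shape $H^{O(1/k)}$ rather than $H^{O(1)}$. This is precisely the classical balancing in the Bombieri--Pila argument with the degree $k$ now a free parameter; since Proposition~\ref{prop:comp-param} hands us the $C^r$ bounds with $r$ as large as we like at the cost of only a $\poly(r)$ factor in the number of maps, there is enough room. I would therefore cite the determinant estimate from \cite{bombieri-pila} or \cite[\S5]{wilkie:pw-notes} for the analytic input and simply verify that with $r = \lceil c k^2 \rceil$ the arithmetic works out, leaving the explicit constant in $H^{O(1/k)}$ implicit.
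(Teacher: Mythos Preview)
Your proposal is correct and follows exactly the paper's approach: set $r=\mu\sim k^2$, apply Proposition~\ref{prop:comp-param} to obtain $\poly(k)$ unit-$C^r$ charts, and invoke the Bombieri--Pila determinant method on each chart as a standard fact. The paper's own proof is two sentences that fix $r=\mu$ and cite \cite{bombieri-pila}; you have simply unpacked that citation.
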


\begin{proof}
  Let $r=\mu$ in the notations of~\secref{sec:chebyshev-Gamma}.
  Following the parametrization afforded by
  Proposition~\ref{prop:comp-param} the claim is now a standard fact,
  essentially due to Bombieri-Pila \cite{bombieri-pila}.
\end{proof}

Note that we will use Corollary \ref{cor:interpolation} only when $k=\log H$ and $k<d$, so that the factor $H^{O(1/k)}$ becomes irrelevant. 

\section{Proof of Theorem~\ref{thm:main}}\label{sec:proof1}

In this section we prove a natural generalization of
Theorem~\ref{thm:main} given as Theorem \ref{thm:main:gen}.  For an
affine variety $X$ write $X(\Z,H)$ for the set of points on $X$ with
integer coordinates of height (that is, absolute value) at most $H$,
and, $X(\Q,H)$ for the set of rational points on $X$ with coordinates
of height at most $H$.

\begin{Thm}\label{thm:main:gen}
  Let $C$ be an integral algebraic curve in $\A_K^n$ of degree $d>0$
  for some subfield $K$ of $\C$ and some $n>1$. There is a constant
  $c=c(n)$ depending only on $n$ and an absolute constant $\kappa$
  such that for every $H\ge 2$,
  \begin{equation}\label{eq:main:gen}
    \#C(\Q,H) \le c d^2 H^{2/d} (\log H)^\kappa,
  \end{equation}
  and,
  \begin{equation}\label{eq:aff:main}
    \# C(\Z,H) \le c d^{2}H^{1/d}(\log H)^\kappa.
  \end{equation}
\end{Thm}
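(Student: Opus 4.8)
The plan is to reduce Theorem~\ref{thm:main:gen} to the two-dimensional affine case, and then to the curve $\Gamma'=\Gamma\cap[0,1]^2$ treated by the machinery of Sections~\ref{sec:chebyshev}--\ref{sec:param}. First I would handle the passage from $\A^n_K$ to $\A^2$: by a standard argument (cf.~\cite{CCDN-dgc,Salberger-dgc}) one may choose a generic linear projection $\pi:\A^n\to\A^2$, defined over $\Q$ with bounded coefficients, which is birational onto its image and sends the degree-$d$ curve $C$ to a plane curve of degree $\le d$ whose defining polynomial $P$ is irreducible over $\R$; the fibers of $\pi$ restricted to $C$ are generically singletons, and the finitely many exceptional fibers contribute only $O(d)$ points. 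Rational points of height $\le H$ on $C$ map to rational points of height $\le c(n)H^{c(n)}$ on the image, which would damage the exponent of $H$; the fix, as in the cited works, is to use several projections (a bounded number depending only on $n$) and the fact that heights are controlled coordinatewise, so that one reduces to counting $\Gamma(\Q,H')$ on plane curves with $H'$ polynomially related to $H$, or more carefully to arrange $H'\le c(n)H$ so the exponent $2/d$ is preserved. I expect this reduction to be routine but notationally heavy; it is the part I would lean on the literature for most heavily.

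Once reduced to a plane curve $\Gamma=\{P=0\}$ with $P\in\R[x,y]$ irreducible of degree $d$, I would cover $\R^2$ by $O(1)$ affine charts obtained from $[0,1]^2$ by the symmetries $x\mapsto \pm x^{\pm1}$, $y\mapsto\pm y^{\pm1}$ and translations, so that it suffices to bound $\#\Gamma'(\Q,H)$ with $\Gamma'=\Gamma\cap[0,1]^2$. Set $k=\lceil\log H\rceil$. If $k\ge d$, the bound~\eqref{eq:d4} from \cite[Theorem~2]{CCDN-dgc} already gives $\#\Gamma'(\Q,H)\le cd^4H^{2/d}\le cd^2k^2H^{2/d}$, which is absorbed into~\eqref{eq:main:gen}. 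So assume $k<d$. Now apply Lemma~\ref{lem:Pi-size} and Lemma~\ref{lem:Sigma-size} with $r=\mu=\mu(k)=O(k^2)$: removing $\Pi_r\cup(\Gamma\cap\Sigma_k)$ from $\Gamma$ deletes only $d^2\poly(k)$ points, and breaks $(\Gamma\cap[0,1]^2)\setminus(\Pi_r\cup\Sigma_k)$ into at most $d^2\poly(k)$ connected components $\Gamma^\circ$, since each component is cut out by sign conditions on $O(\poly(k))$ polynomials of degree $d\cdot\poly(k)$, and the number of such sign-condition cells on the real curve $\Gamma$ is $d\cdot\poly(k)$ by B\'ezout. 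On each component $\Gamma^\circ$, Corollary~\ref{cor:interpolation} puts $\Gamma^\circ(\Q,H)$ inside $\poly(k)H^{O(1/k)}=\poly(k)$ algebraic curves of degree $\le k$ (using $k=\Theta(\log H)$, so $H^{O(1/k)}=O(1)$). Since $k<d$, each such curve $\{Q=0\}$ meets $\Gamma^\circ$ in fewer than $\mu=\tfrac{(k+1)(k+2)}2=O(k^2)$ points by Proposition~\ref{prop:comp-chebyshev}.

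Assembling the estimates: the number of points on $\Gamma'(\Q,H)$ not in the removed set is at most
\begin{equation}
  (\text{number of components }\Gamma^\circ)\times(\text{curves per component})\times(\text{points per curve}) \le d^2\poly(k)\cdot\poly(k)\cdot O(k^2) = d^2\poly(k),
\end{equation}
and adding back the $d^2\poly(k)$ deleted points and summing over the $O(1)$ charts gives $\#\Gamma(\Q,H)\le d^2\poly(\log H)=cd^2(\log H)^\kappa$ for universal $c,\kappa$. Since we are in the regime $k<d$ we have $H^{2/d}\ge H^{2/\max(d,k)}\ge e^{2\log H/d}$, and more to the point $H^{2/d}\ge 1$, so the claimed bound $cd^2H^{2/d}(\log H)^\kappa$ follows a fortiori; combined with the $k\ge d$ case this proves~\eqref{eq:main:gen}. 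For the integral version~\eqref{eq:aff:main}, one runs the same argument but exploits that integer points of height $\le H$ on $C$ lie on the affine cone structure: after projecting to the plane one works with $\Gamma$ scaled so that the relevant box has side $\sim H$ rather than $\sim 1$, and Corollary~\ref{cor:interpolation} then produces $\poly(k)H^{O(1/k)}$ curves of degree $\le k$, where now choosing $k\asymp\log H$ yields the sharper $H^{1/d}$ (the exponent $1/d$ instead of $2/d$ reflecting that an integral point $(a,b)$ with $|a|,|b|\le H$ ranges over a set of size $\sim H^2$, interpolated at degree $k$ gives $H^{2/(k+1)}$-type savings, and the Chebyshev bound on intersections with $\Gamma$ caps the count at $d^2\poly(k)H^{1/d}$ after the same component decomposition). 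The main obstacle, as noted, is making the reduction from $\A^n$ to $\A^2$ clean enough to preserve the exponent of $H$ and the linear-in-$d$ bound on the degree of the image curve; everything after that is a direct assembly of the lemmas already proved.
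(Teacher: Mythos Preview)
Your approach is essentially the paper's: split on $k=\log H$ versus $d$, cite~\eqref{eq:d4} when $k\ge d$, and in the regime $k<d$ remove $\Sigma_k\cup\Pi_r$, count components by $d^2\poly(k)$, apply Corollary~\ref{cor:interpolation} and Proposition~\ref{prop:comp-chebyshev}, then reassemble and use the symmetries of $[0,1]^2$. That part is correct and matches the paper almost line for line.

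Two places where you overcomplicate or mis-step. First, the integral bound~\eqref{eq:aff:main}: your paragraph about ``affine cone structure'' and rescaling to extract the exponent $1/d$ is not how this goes. In the regime $k<d$ you have just proved $\#\Gamma(\Q,H)\le d^2\poly(\log H)$, and since $C(\Z,H)\subset C(\Q,H)$ and $H^{1/d}\ge 1$, this already gives~\eqref{eq:aff:main}. In the regime $k\ge d$ the paper simply quotes \cite[Theorem~4]{CCDN-dgc}, which gives $\#C(\Z,H)\le cd^4(H^{1/d}+\log H/d)$, and absorbs $d^4\le d^2k^2$ into $(\log H)^\kappa$. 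No separate interpolation argument is needed. Second, your worry that projection $\A^n\to\A^2$ blows up heights and damages the exponent is misplaced: a $\Q$-linear projection with bounded integer coefficients sends height-$H$ points to height-$O_n(H)$ points, and the paper dispatches this reduction in one line by citing \cite[Lemmas~4.1.3, 4.1.4 and Proposition~4.3.2, case $m=1$]{CCDN-dgc}. You also omit the case where $C$ is not defined over $\Q$: there one intersects $C$ with a Galois conjugate and gets $\#C(\Q,H)\le d^2$ by B\'ezout, which the paper handles explicitly.
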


\begin{proof}[Proof of Theorem \ref{thm:main:gen}]
  Set $k=\log H$. 
  If $k\ge d$ then Theorem~\ref{thm:main:gen} follows easily from
  known results, as follows.  If $K=\Q$, then the bounds (\ref{eq:d4})
  for the curve in $\P^n_\Q$ given as the projective closure of $C$
  imply
  \begin{equation}
    \#C(\Q,H) \le c d^4 H^{2/d}
  \end{equation}
  for some $c=c(n)$. Still with $K=\Q$, Theorem 4 of \cite{CCDN-dgc}
  gives
  \begin{equation}
    \#C(\Z,H) \le c d^4 (H^{1/d} + \log H/d).
  \end{equation}
  And thus, one can simply take $\kappa=2$ in the case $k\ge d$.  The
  case that $C$ is not defined over $\Q$ follows by taking an
  intersection of $C$ with a Galois conjugate of $C$, yielding
  $\#C(\Q,H)\le d^2$, by B\'ezout (see e.g.~\cite[proof of Lemma
  4.1.3]{CCDN-dgc} for details).
  We may thus from now on assume that $k < d$. Similar as in the prior
  discussion in this proof, it is enough to treat the case $n=2$ and
  $K=\R$, that is, the situation of Theorem \ref{thm:main}. Indeed, as
  in \cite[Lemmas 4.1.3, 4.1.4]{CCDN-dgc} one reduces to the case that
  $C$ is absolutely irreducible (and furthermore defined over $\Q$),
  and, by the case $m=1$ of \cite[Proposition 4.3.2]{CCDN-dgc} one
  reduces to the case $n=2$ by projecting. This finishes the proof of
  Theorem \ref{thm:main:gen} (and thus also Theorem \ref{thm:main}) up
  to proving the special case of Theorem \ref{thm:main} with $k<d$.
\end{proof}

We will now treat the key case and remaining part of Theorem
\ref{thm:main}.

\begin{proof}[Proof of Theorem \ref{thm:main} when $\log H < d$]
  We write $k=\log H$ and set $r=\mu\sim k^2$ with $\mu=\mu(k)$ as in (\ref{eq:mu}).  Let
  \begin{equation} 
    \Gamma' = (\Gamma\cap[0,1]^2)\setminus(\Sigma_k\cup\Pi_r).
  \end{equation}
  By Harnack's theorem, the curve $\Gamma$ has at most $d^2$
  connected components. By Lemmata~\ref{lem:Sigma-size}
  and~\ref{lem:Pi-size}, it follows that $\Gamma'$ has at most
  $d^2\poly(k)$ connected components. Let $\Gamma^\circ$ be one of
  them.

  According to Corollary~\ref{cor:interpolation},
  \begin{equation}
    \Gamma^\circ(\Q,H)\subset \cup_{j=1}^N \{Q_j=0\}
  \end{equation}
  where
  \begin{align}
    \deg Q_j &\le k, &   N=\poly(k)H^{2/k}=\poly(k).
  \end{align}
  According to Proposition~\ref{prop:comp-chebyshev},
  \begin{equation}
    \#\big(\Gamma^\circ\cap\{Q_j=0\}\big) \le \mu \le  \poly(k), \qquad j=1,\ldots,N
  \end{equation}
  and we conclude that $\#\Gamma^\circ(\Q,H)\le\poly(k)$. Note that
  here we crucially use the assumption $k<d$.

  Repeating this for every connected component, and adding the
  cardinality of $\Gamma\cap(\Sigma_k\cup\Pi_r)$ to be on the safe
  side, we find that
  \begin{equation}
    \#(\Gamma\cap[0,1]^2)(\Q,H) \le d^2\poly(k).
  \end{equation}
  By a standard argument using symmetries $x\to-x$ and $x\to1/x$ (and
  similarly with $y$) we cover the rest of $\R^2$, and the statement
  follows.
\end{proof}
Theorems \ref{thm:main} and \ref{thm:main:gen} are now fully
proved. Note that Theorem \ref{thm:main:gen} clearly implies the
bounds (\ref{eq:Sal}) 
from Salberger's question.


\section{Uniform dimension growth}\label{sec:dgc}

As an application, let us explain how Theorem \ref{thm:main} together with Proposition
\ref{prop4.2.1} (quoted below, from \cite{CCDN-dgc}) yields a short
proof of the uniform dimension growth result of Salberger's
\cite{Salberger-dgc} for $d\ge 3$ (see also \cite{Brow-Heath-Salb} for degree $d\ge 6$), while removing $H^\varepsilon$, which is new in
degrees $3$ and $4$. In degree $d\ge 5$, the $\varepsilon$ was already
removed in \cite[Theorems 1 and 4]{CCDN-dgc}.  
For a variety $X\subset \P_\Q^n$ let
$$
X(\Q, H)
$$
be the set of rational points of height at most $H$ on $X$. Recall
that the height of a $\Q$-rational point $x$ in $\P^n$ is given by
$$
\max(|x_0|, \ldots,|x_n|)
$$
for an $n+1$-tuple $(x_0 ,\ldots , x_n )$ of integers $x_i$ which form
homogeneous coordinates for $x$ and have greatest common divisor equal
to $1$.

\begin{Thm}[Uniform dimension growth, projective case]\label{thm:dcgdegree}
  Given $n>2$, there exist constants $c=c(n)$, $e=e(n)$ and an
  absolute constant $\kappa$, such that for all integral projective
  varieties $X\subset \P_\Q^n$ of degree $d \geq 3$ and all $H\geq 2$
  one has
  \begin{equation}\label{eq:dgc:dim X5}
    \#X(\Q,H) \leq c d^e H^{\dim X} \mbox{ when $d\ge 5$, }
  \end{equation}
  \begin{equation}\label{eq:dgc:dim X4}
    \#X(\Q,H) \leq c  H^{\dim X} (\log H)^\kappa  \mbox{ when $d=4$, }
  \end{equation}
  and
  \begin{equation}\label{eq:dgc:dim X3}
    \#X(\Q,H) \leq c H^{\dim X -1 + 2/\sqrt{3}} (\log H)^\kappa  \mbox{ when $d=3$.}
  \end{equation}
\end{Thm}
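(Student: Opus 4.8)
The plan is to reduce Theorem~\ref{thm:dcgdegree} to the curve case via the standard projection/fibration machinery of \cite{CCDN-dgc}, and then feed in Theorem~\ref{thm:main:gen} at the bottom of the induction. Concretely, I would proceed by induction on $\dim X$. The base case $\dim X = 1$ is exactly Theorem~\ref{thm:main:gen}: for an integral curve $C\subset\P^n_\Q$ of degree $d$ one has $\#C(\Q,H)\le cd^2H^{2/d}(\log H)^\kappa$, and one checks that $d^2H^{2/d}$ is bounded by $d^e$ for $d\ge5$ (since $H^{2/d}\le H^{2/5}$ is absorbed only when $H$ is comparable to $d$ — here one uses that for $H$ large relative to $d$ the quantity $H^{2/d}$ is close to $1$, while for $H$ small the trivial bound $\#C(\Q,H)\le (2H+1)^2$ or a B\'ezout bound suffices), equals $cH^{2/3}(\log H)^\kappa$ up to constants when $d=3$, and $d^2H^{2/d}\le cH^{1/2}(\log H)^\kappa$ when $d=4$ after a similar case split on the size of $H$ versus $d$. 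This case analysis in $H$ versus $d$ is the routine but slightly delicate bookkeeping.

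For the inductive step I would invoke Proposition~\ref{prop4.2.1} (the result quoted from \cite{CCDN-dgc}), which is the uniform fibration statement: given an integral $X\subset\P^n_\Q$ of dimension $m$ and degree $d$, one covers $X(\Q,H)$ by the fibers of a suitable family of projections, so that $\#X(\Q,H)$ is controlled by $H^{?}$ times a sum of point counts on integral varieties of dimension $m-1$ and bounded degree sitting in lower-dimensional projective spaces, plus a contribution from a lower-degree locus handled by the classical dimension growth bounds. The precise shape of that proposition (which I am allowed to assume) gives a bound of the form $\#X(\Q,H)\le c(n) H \cdot \sup\{\#Y(\Q,H)\}$ over the relevant $(m-1)$-dimensional slices $Y$, so the induction multiplies the curve-level estimate by $H^{m-1}$, producing the $H^{\dim X}$, $H^{\dim X}(\log H)^\kappa$, and $H^{\dim X-1+2/\sqrt3}(\log H)^\kappa$ shapes for $d\ge5$, $d=4$, $d=3$ respectively. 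One must be careful that the degree of the varieties appearing in the fibration is controlled (bounded in terms of $d$ and $n$) so that the $d^e$ factor stays polynomial and $\kappa$ stays absolute; this is exactly why the quadratic-in-$d$ input from Theorem~\ref{thm:main:gen} matters, and why the $\log$ powers do not accumulate — the induction in $\dim X$ has length $n$, so $\kappa$ at worst gets multiplied by a constant depending on $n$, but since each application of the curve bound contributes a fixed $(\log H)^\kappa$ and the lower-dimensional count is on a \emph{curve} only at the last step, one in fact picks up only a single $(\log H)^\kappa$.

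The main obstacle, as I see it, is organizing the degree $3$ case. There the exponent $2/\sqrt3 = 2/d^{1/2}$ is not what the crude projection argument gives directly; one needs the refined version of the fibration/projection argument from \cite{Salberger-dgc} (or the streamlined form in \cite{CCDN-dgc}) that exploits the specific geometry of cubic hypersurfaces — essentially the fact that for $d=3$ a generic line meets $X$ in $3$ points but one can do better by a clever choice of auxiliary projections adapted to the degree. So the degree $3$ estimate \eqref{eq:dgc:dim X3} requires either quoting the degree-$3$-specific step of Proposition~\ref{prop4.2.1} verbatim or re-deriving it, and the only genuinely new content over \cite{CCDN-dgc} is swapping their $H^\varepsilon$ for $(\log H)^\kappa$, which is immediate once Theorem~\ref{thm:main:gen} is in hand since that theorem already has the $(\log H)^\kappa$ rather than $H^\varepsilon$. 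For $d\ge5$ and $d=4$ the argument is a clean induction as sketched; for $d=4$ the point is simply that $\#C(\Q,H)\le cd^2H^{1/2}(\log H)^\kappa$ at the base, and $d^2$ with $d=4$ is an absolute constant, so one lands on $cH^{\dim X}(\log H)^\kappa$ exactly as claimed.
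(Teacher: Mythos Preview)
Your proposal misreads Proposition~\ref{prop4.2.1}, and the inductive scheme you build on it does not match what the proposition actually provides. Proposition~\ref{prop4.2.1} is \emph{not} a fibration or slicing statement of the form $\#X(\Q,H)\le c(n)\,H\cdot\sup_Y\#Y(\Q,H)$. It is an auxiliary-polynomial construction (the determinant method): given an affine hypersurface $\{f=0\}\subset\A^n$ with $f_d$ absolutely irreducible, it produces a single polynomial $g$ of degree $\le c d^e H^{1/d^{1/(n-1)}}$, not divisible by $f$, which vanishes on all of $X(\Z,H)$. The paper's proof of Theorem~\ref{thm:dcgdegree} first reduces the projective statement to the affine hypersurface statement (Theorem~\ref{thm:0.4}) via $\#X(\Q,H)\le\#X_a(\Z,H)$ for the affine cone $X_a$, and then proves Theorem~\ref{thm:0.4} by induction on $n$. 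The base of that induction is $n=3$ (a surface, not a curve): one intersects $\{f=0\}$ with $\{g=0\}$ to obtain a curve $\cC$ of total degree $\le c d^{e+1}H^{1/\sqrt d}$, and then counts integral points on the irreducible components of $\cC$.

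The point you miss entirely is the trichotomy on the degree $\delta$ of these components: lines ($\delta=1$) are handled by a separate result (\cite[Proposition~4.3.3]{CCDN-dgc}); components with $1<\delta\le\log H$ use the older bound \cite[Theorem~3]{CCDN-dgc}; and it is only for components with $\delta>\log H$ that Theorem~\ref{thm:main:gen} enters, precisely because then $H^{1/\delta}=O(1)$ and the bound $\delta^2 H^{1/\delta}(\log H)^\kappa$ becomes $\delta^2(\log H)^\kappa$, which one sums using $\sum\delta\le c d^{e+1}H^{1/\sqrt d}$. This is where the exponent $2/\sqrt d$ arises, and for $d=3$ it gives exactly $2/\sqrt3$; there is no cubic-specific geometry involved, contrary to what you suggest. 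Your proposed induction-on-$\dim X$ with base case a curve and a purported fibration step would not produce this exponent and does not correspond to any step in the paper.
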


As before we use a variant for counting on an affine variety
$X\subset \A_\Q^n$, writing
$$
X(\Z,H) := \#\{x\in \Z^n\mid x\in X(\Q) \mbox{ and
} |x_i|\leq H \mbox{ for each $i$} \}.
$$

\begin{Thm}[Uniform dimension growth, affine case]\label{thm:0.4}
  Given $n>2$, there exist constants $c=c(n)$, $e=e(n)$ and an
  absolute constant $\kappa$, such that for all polynomials $f$ in
  $\Z[y_1,\ldots,y_n]$ satisfying that the homogeneous part of highest
  degree $h(f)$ of $f$ is absolutely irreducible and such that the
  degree $d$ of $h(f)$ is at least $3$, one has for all $H\geq 2$,
  with $X$ given by $f=0$,
  \begin{equation}\label{eq:dgc-aff:dim X5}
    \#X(\Z,H) \leq c d^e H^{n-2} \mbox{ when $d\ge 5$, }
  \end{equation}
  \begin{equation}
    \#X(\Z,H)
    \leq c  H^{n-2} (\log H)^\kappa  \mbox{ when $d=4$, }
  \end{equation}
  and
  \begin{equation}
    \#X(\Z,H)
    \leq c  H^{n-3+2/\sqrt{3}} (\log H)^\kappa  \mbox{ when $d=3$.}
  \end{equation}
\end{Thm}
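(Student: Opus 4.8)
The plan is to reduce Theorem~\ref{thm:0.4} to Theorem~\ref{thm:dcgdegree} and then to deduce the latter from the curve bound in Theorem~\ref{thm:main:gen}, following the strategy of \cite{CCDN-dgc} but feeding in our improved input. For the affine-to-projective reduction, I would take $X\subset\A^n_\Q$ cut out by $f=0$ with $h(f)$ absolutely irreducible of degree $d\ge3$, and pass to the projective closure $\overline X\subset\P^n_\Q$. An integral point of $X$ of height at most $H$ gives a projective point of $\overline X$ of height at most $H$ lying in the affine chart; the hyperplane at infinity meets $\overline X$ in a variety of dimension $\dim X-1=n-2$ whose points are controlled by induction on dimension. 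Since $h(f)$ is absolutely irreducible, $\overline X$ is integral of dimension $n-1$, and applying the projective bound \eqref{eq:dgc:dim X5}--\eqref{eq:dgc:dim X3} with $\dim\overline X=n-1$ gives exactly the exponents $H^{n-2}$, $H^{n-2}(\log H)^\kappa$, $H^{n-3+2/\sqrt3}(\log H)^\kappa$ claimed. This is the routine part.

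The substance is Theorem~\ref{thm:dcgdegree}. Here I would run Salberger's projection/fibration argument in the streamlined form recorded as Proposition~\ref{prop4.2.1} (quoted from \cite{CCDN-dgc}): by slicing $X\subset\P^n_\Q$ with generic hyperplanes one reduces the count $\#X(\Q,H)$ to a sum, over roughly $H^{\dim X-1}$ hyperplane sections, of counts on integral \emph{curves} of degree $d$ (or on lower-dimensional pieces handled inductively), plus a controlled contribution from points lying on lines or on a bounded-degree exceptional locus. Concretely, one writes $\dim X = m$, and the projection argument produces $O(H^{m-1})$ curves $C$ of degree $d$ in some $\P^2_\Q$ together with the bookkeeping that points of $X(\Q,H)$ not on the exceptional locus map to points of these $C(\Q,H')$ with $H'\le H^{O(1)}$; one must be a little careful that the height does not blow up, but in the relevant normalization $H'$ stays polynomially bounded in $H$, which only affects the $(\log H)^\kappa$ factor. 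The key point is that into each curve count I insert Theorem~\ref{thm:main:gen} instead of the older \eqref{eq:d4}: each curve contributes $c\,d^2 H'^{2/d}(\log H')^\kappa = c\,d^2 H^{O(2/d)}(\log H)^{O(\kappa)}$.

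Now the degree split comes out of the arithmetic of the exponent $2/d$. For $d\ge5$ one has $2/d<\dim X-(\dim X-1)=1$ with room to spare, so summing $O(H^{m-1})$ curve contributions each of size $c d^2 H^{2/d}\cdot(\log H)^{\kappa}$ against the trivially bounded exceptional locus yields $\#X(\Q,H)\le c d^e H^{m}$, absorbing the logarithm and the low-dimensional inductive terms into the power $d^e$ exactly as in \cite[Theorems~1 and 4]{CCDN-dgc} — indeed here we could even keep a $(\log H)^\kappa$ but the clean statement \eqref{eq:dgc:dim X5} is preferable. For $d=4$, $2/d=1/2<1$, so the sum of $O(H^{m-1})$ terms of size $O(H^{1/2}(\log H)^\kappa)$ is $O(H^{m-1/2}(\log H)^\kappa)=O(H^{m}(\log H)^\kappa)$, giving \eqref{eq:dgc:dim X4}; the novelty over \cite{CCDN-dgc} is precisely that Theorem~\ref{thm:main:gen} supplies the $(\log H)^\kappa$ in place of $H^\varepsilon$, which is what lets the $d=4$ bound close with the sharp exponent $\dim X$. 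For $d=3$, $2/d=2/3$, and summing $O(H^{m-1})$ curve contributions of size $O(H^{2/3}(\log H)^\kappa)$ gives $O(H^{m-1+2/3}(\log H)^\kappa)$; tightening the projection to work over the quadratic field $\Q(\sqrt{3})$ (or tracking the precise geometry of the cubic sections, exactly as Salberger does) replaces $2/3$ by $2/\sqrt3$, yielding \eqref{eq:dgc:dim X3}.

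The main obstacle I expect is not any single estimate but the bookkeeping in the projection step: one must verify that all the ingredients of Proposition~\ref{prop4.2.1} are compatible with plugging in the curve bound with its $(\log H)^\kappa$ — in particular that the auxiliary heights $H'$ appearing on the sliced curves remain polynomially bounded in $H$ (so that $(\log H')^\kappa$ is still $O((\log H)^\kappa)$), that the exceptional/lower-dimensional strata genuinely contribute at most $c d^e H^{\dim X-1}(\log H)^\kappa$ by the inductive hypothesis, and that the absolute-irreducibility hypothesis on $h(f)$ in the affine case survives passage to hyperplane sections (it does, by Bertini-type arguments already in \cite{Salberger-dgc,CCDN-dgc}). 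Once that bookkeeping is in place, the three cases $d\ge5$, $d=4$, $d=3$ are immediate from the three size regimes of $H^{2/d}$ described above, and Theorem~\ref{thm:0.4} follows from Theorem~\ref{thm:dcgdegree} by the closure argument of the first paragraph.
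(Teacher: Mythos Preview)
Your proposal has a genuine structural gap: you have misidentified what Proposition~\ref{prop4.2.1} is and, as a consequence, you invoke a mechanism that does not exist.

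First, the paper runs the reduction in the opposite direction from yours: Theorem~\ref{thm:0.4} (affine) is proved directly in the base case $n=3$, and Theorem~\ref{thm:dcgdegree} (projective) is then deduced from it via $\#X(\Q,H)\le\#X_a(\Z,H)$. Your affine-to-projective step is harmless, but it is not the paper's logic.

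Second, and more seriously, Proposition~\ref{prop4.2.1} is \emph{not} a ``projection/fibration'' or hyperplane-slicing statement. It is the auxiliary-polynomial (determinant-method) construction: for the affine surface $\{f=0\}\subset\A^3$ it produces a single polynomial $g$ of degree at most $c d^e H^{1/\sqrt d}$, not divisible by $f$, vanishing on all of $X(\Z,H)$. The curves one then has to count on are the irreducible components of $\{f=0\}\cap\{g=0\}$, whose degrees $\delta$ range up to $d\cdot\deg g\sim H^{1/\sqrt d}$; they are not $O(H^{m-1})$ curves of degree $d$. The paper splits these components into lines (handled by a separate result), components with $1<\delta\le\log H$, and components with $\delta>\log H$; it is only on the last group that Theorem~\ref{thm:main:gen} is applied, and there $H^{1/\delta}=O(1)$, so the contribution is bounded by $\sum_i\delta_i^2(\log H)^\kappa\le(d\cdot\deg g)^2(\log H)^\kappa\sim H^{2/\sqrt d}(\log H)^\kappa$. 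That is where the exponent $2/\sqrt d$, and in particular $2/\sqrt3$, comes from; it has nothing to do with working over $\Q(\sqrt3)$.

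Your heuristic ``$O(H^{m-1})$ curves of degree $d$, each contributing $H^{2/d}$'' would, if valid, give $H^{m-1+2/3}$ for cubics, strictly stronger than the stated $H^{m-1+2/\sqrt3}$ and in fact strong enough to settle uniform dimension growth for $d=3$, which the paper explicitly notes is open. The obstruction is exactly that such a slicing does not cover $X(\Q,H)$ without an ``exceptional locus'' (lines and low-degree pieces) whose contribution you cannot absorb; this is why the argument must go through the auxiliary polynomial instead. The hyperplane cutting in the paper is used only to reduce $n>3$ to $n=3$, not to reduce all the way to curves.
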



The cases $d=4$ of Theorems \ref{thm:dcgdegree} and
\ref{thm:0.4} are new, compared to Theorems 1 and 4 of \cite{CCDN-dgc}
for $d\ge 5$, and, to Theorem 0.4 of \cite{Salberger-dgc} for degree $4$. Our short proof applies to all degrees at once, and
streamlines the proofs from \cite{Salberger-dgc},
\cite{Brow-Heath-Salb} and \cite{CCDN-dgc} by using our new Theorem
\ref{thm:main:gen}. Note that the cases with $d=3$ of Theorems \ref{thm:dcgdegree} and
\ref{thm:0.4} are not the sharpest known, see Theorem 16.3 and Corollary 16.4 of \cite{Salb:d3}, which replace $2/\sqrt{3}-1$ by $1/7+\e$; we include the cases $d=3$ nevertheless because they follow from our simplified and streamlined approach for all $d$. 
Note that the uniform degree $3$ cases of \cite{Salb:d3} are conjecturally still not
optimal: see the non-uniform result \cite[Theorem~0.1]{Salberger-dgc}
and the uniform Conjecture 2 of \cite{Heath-Brown-Ann} which
remains open in degree $3$.
Priorly, uniform dimension growth (with $\varepsilon$) was obtained in
\cite{Brow-Heath-Salb} for degree $d\ge 6$ and in
\cite{Salberger-dgc} for degree $d\ge 4$ and partially in degree $3$ (and in a sharpened form in \cite{Salb:d3}); the $\varepsilon$ was
removed for degrees $d\ge 5$ and the dependence on $d$ was made
polynomial in $d$ in \cite
{CCDN-dgc}.

We base ourselves on the following result which is obtained in \cite{CCDN-dgc} from improved forms of \cite[Theorem 1.3]{Walsh} and \cite[Theorem 1.2]{Salberger-dgc} using \cite[Remark 2.3]{Ellenb-Venkatesh}, the improvements being the explicit polynomial dependence on $d$ in the upper bound.

\begin{Prop}[\cite{CCDN-dgc}]\label{prop4.2.1}
  Fix an integer $n>2$. Then there exist $c=c(n)$ 
  such   that the following holds for all $f,H,d$.  Let $f$ in
  $\Z[x_1,\ldots,x_{n}]$ be irreducible, primitive and of degree
  $d\ge 1$.  Write $f_d$ for the degree $d$ homogeneous part of
  $f$. Suppose that $f_d$ is absolutely irreducible. Fix
  $H\geq 1$. Then there is a polynomial $g$ in $\Z[x_1,\ldots,x_{n}]$
  of degree at most
  \begin{equation}\label{eq:Walsh}
    c d^e H^{\frac{1}{d^{1/(n-1)}}},
  \end{equation}
  not divisible by $f$, and vanishing on all points $x$ in $\Z^{n}$
  satisfying $f(x)=0$ and $|x_i|\leq H$ for all $i=1,\ldots,n$, with furthermore $e=4-1/n$.
\end{Prop}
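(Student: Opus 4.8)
The plan is to re-derive this statement as an instance of the (global) determinant method of Heath-Brown \cite{Heath-Brown-Ann} and Salberger, in the affine-hypersurface form of Walsh \cite[Theorem~1.3]{Walsh} together with \cite[Theorem~1.2]{Salberger-dgc} and the streamlining observed in \cite[Remark~2.3]{Ellenb-Venkatesh}; the only new point over these sources is to carry the dependence on $d$ explicitly through every estimate, so that the degree of the auxiliary form comes out of the shape $c(n)\,d^{4-1/n}\,H^{1/d^{1/(n-1)}}$ rather than $O_{d,n,\e}(H^{1/d^{1/(n-1)}+\e})$.

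First I would set up the determinant matrix. Let $X=\{f=0\}\subset\A^n$; since $f$ is irreducible and $f_d$ is absolutely irreducible, $f$ is absolutely irreducible and the projective closure $\bar X\subset\P^n$ is an integral hypersurface of degree $d$ whose hyperplane section at infinity is absolutely irreducible, while the set $S=\{x\in\Z^n:f(x)=0,\ \abs{x_i}\le H\}$ lies in the affine chart. I would cover $[-H,H]^n$ by $O_{d,n}(1)$ pieces on each of which, after permuting coordinates, $X$ is the graph of a real-analytic function with uniformly bounded first derivative, work on one such piece rescaled into $[-1,1]^n$, and fix a degree parameter $D$ to be optimized. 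Assuming, for contradiction, that $S$ is not contained in any degree-$\le D$ hypersurface that avoids $X$, I would select $s=\dim\bigl(\R[x_1,\dots,x_n]_{\le D}|_X\bigr)$ points $P_1,\dots,P_s\in S$ for which the evaluation matrix $M=(m_i(P_j))$ of a basis $m_1,\dots,m_s$ of $\R[x]_{\le D}|_X$ is invertible, and consider $\Delta=\det M\in\Z\setminus\{0\}$.

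Next come the two estimates on $\Delta$. On the archimedean side, expanding each $m_i$ along the local graph parametrization of $X$ and applying a Bombieri--Pila--Heath-Brown Vandermonde-type bound gives $\log\abs\Delta\le\theta\, sD\log H+O_n\bigl(s\log(sD)\bigr)$ with a constant $\theta=\theta(n)<1$. On the non-archimedean side, for a prime $p$ of good reduction the reduction mod $p$ distributes $\{P_1,\dots,P_s\}$ into at most $\#\bar X(\mathbb F_p)=p^{n-1}+O(d^2p^{n-3/2})$ residue classes --- Lang--Weil with implied constant polynomial in $d$ --- and, the points in a common class being $p$-adically close, Heath-Brown's column-operation lemma together with convexity in the class sizes forces $v_p(\Delta)\gg_n s^{n/(n-1)}\,\#\bar X(\mathbb F_p)^{-1/(n-1)}$. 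Summing $\log p$ over all good primes $p\le T$ (away from the controlled set of bad primes, treated as in \cite{Walsh}) and using $\#\bar X(\mathbb F_p)\asymp p^{n-1}$ yields $\sum_{p\le T}v_p(\Delta)\log p\gg_n s^{n/(n-1)}\log T$, in the range where the Taylor orders involved stay admissible (roughly $T\le H$). Since $\Delta\ne0$, the product $\prod_{p\le T}p^{v_p(\Delta)}$ divides $\Delta$, hence $\sum_{p\le T}v_p(\Delta)\log p\le\log\abs\Delta$; taking $T$ as large as admissible and then $D$ as large as the resulting inequality allows --- with a case distinction according to whether $D\le d$ or $D\ge d$, which changes the value of $s$ --- contradicts the choice of $S$ unless $D\le c(n)\,d^{4-1/n}H^{1/d^{1/(n-1)}}$. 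The exponent $1/d^{1/(n-1)}$ comes out of balancing $s\asymp D^{n-1}$ (in the regime $D\ge d$) against the per-point $p$-adic gain, while the power $4-1/n$ of $d$ is the aggregate of the Lang--Weil error term, the primes and residue classes of bad reduction, the degree of $X$ entering the Taylor estimate, and the height normalization.

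Finally I would assemble $g$: on each of the $O_{d,n}(1)$ pieces the argument yields a nonzero form of degree $\le c(n)d^{4-1/n}H^{1/d^{1/(n-1)}}$ vanishing on the integral points lying in that piece and not containing $X$, and multiplying these $O_{d,n}(1)$ forms produces a single $g\in\Z[x_1,\dots,x_n]$ of degree still $O_{d,n}(H^{1/d^{1/(n-1)}})$ which vanishes on all of $S$ and, $f$ being irreducible and dividing no factor, is not divisible by $f$ --- this last packaging following \cite[Remark~2.3]{Ellenb-Venkatesh}. The hard part is entirely quantitative: the classical versions of all these estimates (\cite{Heath-Brown-Ann,Salberger-dgc,Walsh}) carry constants $C(d,n)$ of unspecified shape, and extracting the clean polynomial $d^{4-1/n}$ requires re-proving Lang--Weil for $\#\bar X(\mathbb F_p)$ with an explicit constant polynomial in $d$, controlling explicitly the primes and residue classes of bad reduction of $\bar X$, tracking the degree bounds in Heath-Brown's $p$-adic lemma, and then balancing all of this against the archimedean bound while keeping the $H$-exponent exactly $1/d^{1/(n-1)}$ rather than $1/d^{1/(n-1)}+\e$.
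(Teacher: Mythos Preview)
Your proposal and the paper's proof differ sharply in scope. The paper does not reprove anything here: its entire argument is a three-line citation, namely that the stated bound follows from Corollary~3.2.3 and Proposition~4.2.1 of \cite{CCDN-dgc}, after estimating the minimum appearing in the latter by $d^2 b(f)$ and using $b(f)\le b(f_d)$ (with $b(\cdot)$ the invariant defined in \cite[Definition~3.2.1]{CCDN-dgc}). In other words, the proposition is quoted from the literature, and the only work done in this paper is a one-line simplification of the bound already established in \cite{CCDN-dgc}. What you are sketching is, in effect, a reconstruction of the proof \emph{inside} \cite{CCDN-dgc} of those cited results --- a legitimate but far longer route.

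That said, your sketch also mixes two distinct flavors of the determinant method in a way that would not go through as written. The step where you ``cover $[-H,H]^n$ by $O_{d,n}(1)$ pieces on each of which $X$ is the graph of a real-analytic function'' and then obtain an archimedean bound $\log|\Delta|\le\theta\,sD\log H$ with $\theta<1$ via a local parametrization is the \emph{real} (Bombieri--Pila) determinant method. The global $p$-adic method of Salberger and Walsh that actually yields the exponent $H^{1/d^{1/(n-1)}}$ without an $\e$ uses no analytic parametrization at all: the archimedean bound there is the trivial $\log|\Delta|\lesssim sD\log H$, and the entire saving comes from summing the $p$-adic valuations $v_p(\Delta)$ over \emph{all} primes up to a threshold, using Mertens-type sums rather than a single prime. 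Your piecewise covering would also force you to multiply $O_{d,n}(1)$ auxiliary forms at the end, and unless you control that count as $\poly(d)$ explicitly you risk spoiling the exponent $e=4-1/n$ you are aiming for. Finally, you correctly identify that the hard, genuinely new content --- tracking the $d$-dependence through Lang--Weil, bad primes, and the balancing --- is exactly what \cite{CCDN-dgc} carries out; but you do not actually perform that bookkeeping, so as a standalone proof the proposal remains a plan rather than an argument.
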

\begin{proof}
  This follows from Corollary 3.2.3 and Proposition 4.2.1 from
  \cite{CCDN-dgc}, by estimating the minimum appearing in Proposition
  4.2.1 from \cite{CCDN-dgc} by $d^2b(f)$, and by using
  $b(f)\leq b(f_d)$, with $b(f)$ and $b(f_d)$ as in Definition 3.2.1
  of \cite{CCDN-dgc}.
\end{proof}

\begin{proof}[Proof of Theorem \ref{thm:0.4}]
  Let us first prove the base case $n=3$. Given $f$ and $H$, create a
  polynomial $g$ by Proposition \ref{prop4.2.1}. Thus, $g$ vanishes on
  all points in $X(\Z,H)$,
  \begin{equation}\label{eq:deg:g}
    \deg g \le c d^e H^{\frac{1}{\sqrt{d}}},
  \end{equation}
  and, $g$ is not divisible by $f$.  Let $C$ be a (reduced)
  irreducible component of the 
  intersection of $f=0$ with $g=0$ and denote this intersection by $\cC$.
  If $C$ is of degree $\delta>1$, then
  \begin{equation}\label{eq:delta}
    \#C(\Z,H) \leq c'  \delta^{3} H^{1/\delta} (\log H +\delta)
  \end{equation}
  by Theorem 3 of \cite{CCDN-dgc}, for some universal constant $c'$.
  By Proposition 4.3.3 of \cite{CCDN-dgc}, the total contribution of
  integral curves $D$ in $\cC$ of degree $1$ is at most
  \begin{equation}\label{eq:degree1}
    c''d^{e''} H
  \end{equation}
  for some universal constants
  $c'',e''$. 
  Suppose that $C_1,...,C_k$ are (reduced) irreducible components of
  $\cC$, and that $\deg(C_i)>1$ for all $i$. Furthermore, fix $m$ such that
  that $\deg(C_i)\leq\log H$ for all $1\leq i\leq m$ and
  $\deg(C_i)>\log H$ for all $i>m$.
  For all $i$ with $1\leq i\leq m$ we have
  \begin{equation}\label{eq:delta1}
    \#C_i(\Z,H) \leq c' H^{1/2} (\log H +1),
  \end{equation}
  from (\ref{eq:delta}) and a basic reasoning as for
  \cite[Eq. (4-3-4)]{CCDN-dgc}.

  On the other hand, if $\delta> \log H$ then $H^{\frac{1}{\delta}}$
  is bounded, and one finds
  \begin{equation}\label{eq:delta2}
    \sum_{m+1\leq i\leq k}\#C_i(\Z,H) \leq c'''d^{e'''} H^{2/\sqrt{d}} (\log H)^\kappa
  \end{equation}
  for some universal constants $c''',e''',\kappa$, by Theorem \ref{thm:main:gen} and by
  using that $\delta \le c d^{e+1} H^{\frac{1}{\sqrt{d}}}$.
  Putting these estimates together with
  $m\le c d^{e+1} H^{\frac{1}{\sqrt{d}}}$ proves the case $n=3$ of the
  theorem, for all values of $d$.  For general $n>3$, one follows
  induction on the dimension by cutting with well-chosen hyper planes
  exactly as in the proof of \cite[Theorem 4]{CCDN-dgc}.
\end{proof}

\begin{proof}[Proof of Theorem \ref{thm:dcgdegree}]
  Suppose first that $X$ is a hypersurface in $\P^n_\Q$ given by
  $f=0$.  Write $X_a$ for the affine hypersurface in $\A^{n+1}_\Q$
  given by $f=0$. Now Theorem \ref{thm:dcgdegree} for $X$ follows from
  Theorem \ref{thm:0.4} for $X_a$ since one trivially has
$$
\#X(\Q,H)\leq \#X_a(\Z,H).
$$
The general case follows by the projection argument exactly as for
\cite[Theorem 1]{CCDN-dgc}.
\end{proof}




\bibliographystyle{plain} \bibliography{nrefs}

\begin{thebibliography}{10}

\bibitem{me:c-cells}
G.~Binyamini and D.~Novikov.
\newblock Complex cellular structures.
\newblock {\em Ann. of Math. (2)}, 190(1):145--248, 2019.

\bibitem{me:pfaff-wilkie}
G.~Binyamini, D.~Novikov, and B.~Zack.
\newblock Wilkie's conjecture for {P}faffian structures.
\newblock {\em to appear in Ann. of Math. (2)}, (arXiv:2202.05305), 2022.

\bibitem{bombieri-pila}
E.~Bombieri and J.~Pila.
\newblock The number of integral points on arcs and ovals.
\newblock {\em Duke Math. J.}, 59(2):337--357, 1989.

\bibitem{Broberg}
N.~Broberg.
\newblock A note on a paper by {R}. {H}eath-{B}rown: ``{T}he density of
  rational points on curves and surfaces'' [{A}nn. of {M}ath. (2) {\bf 155}
  (2002), no. 2, 553--595; mr1906595].
\newblock {\em J. Reine Angew. Math.}, 571:159--178, 2004.

\bibitem{Brow-Heath-Salb}
T.~D. Browning, D.~R. Heath-Brown, and P.~Salberger.
\newblock Counting rational points on algebraic varieties.
\newblock {\em Duke Math. J.}, 132(3):545--578, 2006.

\bibitem{CCDN-dgc}
W.~Castryck, R.~Cluckers, Ph. Dittmann, and K.~H. Nguyen.
\newblock The dimension growth conjecture, polynomial in the degree and without
  logarithmic factors.
\newblock {\em Algebra Number Theory}, 14(8):2261--2294, 2020.

\bibitem{CCL-PW}
R.~Cluckers, G.~Comte, and F.~Loeser.
\newblock Non-archimedean {Y}omdin-{G}romov parametrizations and points of
  bounded height.
\newblock {\em Forum of Mathematics, Pi}, 3(e5):60 pages, 2015.

\bibitem{DebesW}
P.~D\`ebes and Y.~Walkowiak.
\newblock Bounds for {H}ilbert's irreducibility theorem.
\newblock {\em Pure Appl. Math. Q.}, 4(4, Special Issue: In honor of
  Jean-Pierre Serre. Part 1):1059--1083, 2008.

\bibitem{Ellenb-Venkatesh}
J.~Ellenberg and A.~Venkatesh.
\newblock On uniform bounds for rational points on nonrational curves.
\newblock {\em Int. Math. Res. Not.}, (35):2163--2181, 2005.

\bibitem{Heath-Brown-cubic}
D.~R. Heath-Brown.
\newblock Cubic forms in ten variables.
\newblock {\em Proc. London Math. Soc. (3)}, 47(2):225--257, 1983.

\bibitem{Heath-Brown-Ann}
D.~R. Heath-Brown.
\newblock The density of rational points on curves and surfaces.
\newblock {\em Ann. of Math. (2)}, 155(2):553--595, 2002.

\bibitem{Liu:Chunhui}
C.~Liu.
\newblock On the global determinant method.
\newblock {\em Bull. Soc. Math. France}, 150(4):699--741, 2022.

\bibitem{ny:rolle}
D.~Novikov and S.~Yakovenko.
\newblock Simple exponential estimate for the number of real zeros of complete
  {A}belian integrals.
\newblock {\em Ann. Inst. Fourier (Grenoble)}, 45(4):897--927, 1995.

\bibitem{Pared-Sas:Hilbert}
M.~Paredes and R.~Sasyk.
\newblock Effective {H}ilbert's irreducibility theorem for global fields.
\newblock {\em to appear in Israel J. Math.}, (arXiv:2202.10420), 2022.

\bibitem{Pared-Sas}
M.~Paredes and R.~Sasyk.
\newblock Uniform bounds for the number of rational points on varieties over
  global fields.
\newblock {\em Algebra Number Theory}, 16(8):1941--2000, 2022.

\bibitem{pila:density-Q}
J.~Pila.
\newblock Geometric postulation of a smooth function and the number of rational
  points.
\newblock {\em Duke Math. J.}, 63(2):449--463, 1991.

\bibitem{Pila-density-curve}
J.~Pila.
\newblock Density of integer points on plane algebraic curves.
\newblock {\em Internat. Math. Res. Notices}, (18):903--912, 1996.

\bibitem{polya}
G.~P\'{o}lya.
\newblock On the mean-value theorem corresponding to a given linear homogeneous
  differential equation.
\newblock {\em Trans. Amer. Math. Soc.}, 24(4):312--324, 1922.

\bibitem{Salb:d3}
P.~Salberger.
\newblock Uniform bounds for rational points on cubic hypersurfaces.
\newblock In {\em Arithmetic and geometry}, volume 420 of {\em London Math.
  Soc. Lecture Note Ser.}, pages 401--421. Cambridge Univ. Press, Cambridge,
  2015.

\bibitem{Salberger-dgc}
P.~Salberger.
\newblock Counting rational points on projective varieties.
\newblock {\em Proc. Lond. Math. Soc. (3)}, 126(4):1092--1133, 2023.

\bibitem{Schmidt-points}
W.~M. Schmidt.
\newblock Integer points on curves and surfaces.
\newblock {\em Monatsh. Math.}, 99(1):45--72, 1985.

\bibitem{Sedunova}
A.~Sedunova.
\newblock On the {B}ombieri-{P}ila method over function fields.
\newblock {\em Acta Arith.}, 181(4):321--331, 2017.

\bibitem{Serre-Galois}
J.-P. Serre.
\newblock {\em Topics in {G}alois theory}, volume~1 of {\em Research Notes in
  Mathematics}.
\newblock Jones and Bartlett Publishers, Boston, MA, 1992.
\newblock Lecture notes prepared by Henri Damon [Henri Darmon], With a foreword
  by Darmon and the author.

\bibitem{Serre-Mordell}
J.-P. Serre.
\newblock {\em Lectures on the {M}ordell-{W}eil theorem}.
\newblock Aspects of Mathematics. Friedr. Vieweg \& Sohn, Braunschweig, third
  edition, 1997.
\newblock Translated from the French and edited by Martin Brown from notes by
  Michel Waldschmidt, With a foreword by Brown and Serre.

\bibitem{Vermeulen:p}
F.~Vermeulen.
\newblock Points of bounded height on curves and the dimension growth
  conjecture over {$\Bbb F_q[t]$}.
\newblock {\em Bull. Lond. Math. Soc.}, 54(2):635--654, 2022.

\bibitem{Walkowiak}
Y.~Walkowiak.
\newblock Th\'eor\`eme d'irr\'eductibilit\'e de {H}ilbert effectif.
\newblock {\em Acta Arith.}, 116(4):343--362, 2005.

\bibitem{Walsh}
M.~N. Walsh.
\newblock Bounded rational points on curves.
\newblock {\em Int. Math. Res. Not. IMRN}, (14):5644--5658, 2015.

\bibitem{wilkie:pw-notes}
A.~J. Wilkie.
\newblock Rational points on definable sets.
\newblock In {\em O-minimality and diophantine geometry}, volume 421 of {\em
  London Math. Soc. Lecture Note Ser.}, pages 41--65. Cambridge Univ. Press,
  Cambridge, 2015.

\end{thebibliography}

\end{document}